\title{Tannakization of quasi-categories and monadic descent}
\author{Romie Banerjee}
\date{\today}
\email{romie@iiserb.ac.in}
\keywords{symmetric monoidal $\infty$-categories, Tannakian formalism, motivic Galois group, monadic descent}
\begin{document}

\begin{abstract}
Given a symmetric monoidal stable $\infty$-category $\mathcal{C}$ and a left adjoint symmetric monoidal fiber functor to $\operatorname{Mod}_A^{\otimes}$ for some $\mathbb{E}_{\infty}$-ring $A$, one can construct a derived group scheme $G$ of monoidal automorphisms of this functor. The left adjoint fiber functor also induces a monad on $\mathcal{C}$. Under some finiteness hypothesis on the fiber functor, we show there is a comparison functor from the category of representations of $G$ to the descent category of the induced monad on $\mathcal{C}$. 
\end{abstract}

\maketitle
\setcounter{tocdepth}{1}
\tableofcontents

\theoremstyle{definition}
\newtheorem{definition}{Definition}[section]
\newtheorem{example}{Example}[section]
\newtheorem{quest}{Question}[section]

\theoremstyle{plain}
\newtheorem{prop}{Proposition}[section]
\newtheorem{lemma}{Lemma}[section]

\newtheorem{theorem}{Theorem}[section]

\newtheorem{Theorem}{Theorem}
\renewcommand*{\theTheorem}{\Alph{Theorem}}

\newtheorem{cor}{Corollary}[section]

\theoremstyle{remark}
\newtheorem{remark}{Remark}[section]
\newtheorem{nota}{Notation}[section]

\section{Introduction}

\subsection{Tannakization and motivic Galois group}
Given a symmetric monoidal stable $\infty$-category $\mathcal{C}^{\otimes}$, an $\mathbb{E}_{\infty}$-ring $A$ and a fiber functor (symmetric monoidal, left exact) $\omega:\mathcal{C}^{\otimes} \to \operatorname{Mod}_A^{\otimes}$, one can associate a derived group scheme $G$ over $A$, called the {\em tannakization} of the data $(\mathcal{C}^{\otimes}, A, \omega)$. The derived group scheme $G$ agrees with the group scheme $\operatorname{Aut}(\omega)$ of (higher) monoidal automorphisms of the fiber functor $\omega$. (see \cite{Iwanari})

The fiber functor lifts to a symmetric monoidal map into the $\infty$-category of representations of $G$ making the following diagram commute in the $\infty$-category of symmetric monoidal $\infty$-categories.
$$\xymatrix{
&\operatorname{Rep}_A(G)^{\otimes} \ar[d]^{\operatorname{forget}} \\
\mathcal{C}^{\otimes} \ar[ur] \ar[r]_{\omega} & \operatorname{Mod}_A^{\otimes}\\
}$$ 
Here $\operatorname{Rep}_A(G)$ is the $\infty$-category of $A$-modules with a $G$-action. 

It may happen that the tannakization is representable by a derived Hopf algebra $\mathcal{H}$ over $A$ and the lift identifies $\mathcal{C}^{\otimes}$ with the category of representations of $G$. In this case the affine group scheme $\mathsf{Spec}\,\mathcal{H}$ may be called the {\em motivic Galois group} of the category $\mathcal{C}^{\otimes}$ at the basepoint $\omega$. 

\subsubsection{Example} As a prototypical example one can consider a fiber functor which is a symmetric monoidal left adjoint map $\omega:B\operatorname{-mod}^{\otimes} \to A\operatorname{-mod}^{\otimes}$ for $\mathbb{E}_{\infty}$-rings $A$ and $B$. Then $\omega = f^*$ for some $\mathbb{E}_{\infty}$-algebra map $f:B\to A$. The tannakization is the derived affine group scheme $G = \mathsf{Spec}(A\otimes_BA)$ over $\mathsf{Spec}A$. The group structure arises from the Bar construction associated with the map $\mathsf{Spec}B \to \mathsf{Spec}A$.

The category of $G$-representations is equivalent to the category of descent data for the map $f:B\to A$, and $G$ is the motivic Galois group of $\operatorname{Mod}_B^{\otimes}$ if and only if $f$ is of effective descent for modules.

\subsection{The descent category}
Let $T$ be a monad on an $\infty$-category $\mathcal{C}$. The descent category of $T$ in $\mathcal{C}$ a category of comodules in the $\infty$-category of $T$-modules in $\mathcal{C}$. Informally, the objects of $\operatorname{Desc}_{\mathcal{C}}(T)$ are $T$-modules + ``descent data". There is a canonical map $Q_T:\mathcal{C} \to \operatorname{Desc}_{\mathcal{C}}(T)$. The monad $T$ is said to be of {\em effective descent in $\mathcal{C}$} when $Q_T$ is an equivalence. The descent category is said to be {\em Tannakian} if it is equivalent to a category of comodules over a coalgebra in a monoidal $\infty$-category. (see \cite{M},\cite{Hess})

\subsubsection{Example} Let $\phi:B\to A$ be a map of $\mathbb{E}_{\infty}$-rings. The induced adjunction $-\otimes_AB: \operatorname{Mod}_A \rightleftarrows \operatorname{Mod}_B: \phi_*$ defines a monad $T = \phi_*(-\otimes_AB)$ on $\operatorname{Mod}_A$ and a comonad $K=\phi_*(-)\otimes_AB$ on $\operatorname{Mod}_B$. There is a coalgebra object $A\otimes_BA$ over $A$ whose underlying $A$-module is $A\otimes_BA$ and coalgebra structure is given by the cosimplicial $\mathbb{E}_{\infty}$-ring over $A$ coming from the cobar construction associated with $\phi:A\to B$. The descent category for the monad $T$ is equivalent to the category of comodules over this coalgebra.

\subsection{Main results} 
The aim of this paper is to relate these two categories in the context of a fiber functor adjunction. Let $\mathcal{C}^{\otimes}$ be a symmetric monoidal stable $\infty$-category and $\omega:\mathcal{C}^{\otimes} \rightleftarrows \operatorname{Mod}_A^{\otimes}:\omega'$ be an adjunction induced by a fiber functor $\omega$. If we assume $\omega$ to preserve limits then $\omega \simeq \operatorname{Hom}_{\mathcal{C}}(X_{\omega},-)$ where $X_{\omega}$ is a compact object in $\operatorname{Mod}_A(\mathcal{C})$. In fact there is an equivalence between the $\infty$-category of limit preserving  $\operatorname{Mod}_A^{\otimes}$ valued fiber functors on $\mathcal{C}^{\otimes}$ and the $\infty$-category of compact $A$-modules in $\mathcal{C}$. This is used to prove that the tannakization $\operatorname{Aut}_{\omega}^{\otimes}$ in this case is represented by a derived {\em affine} group scheme. Also in this case there is a comparison functor $$\operatorname{Rep}_A(\operatorname{Aut}_{\omega}^{\otimes}) \to \operatorname{Desc}_T(\mathcal{C}).$$

There is a notion of a tensor product of presentable $\infty$-categories, generalizing Deligne's tensor product of abelian categories, that makes the $\infty$-category of presentable $\infty$-categories into a symmetric monoidal $\infty$-category. A symmetric monoidal presentable $\infty$-category is a commutative monoid objects with respect to this tensor product. The symmetric monoidal fiber functor $\omega:\mathcal{C}^{\otimes} \to \operatorname{Mod}^{\otimes}_A$ exhibits $\operatorname{Mod}_A$ as a commutative algebra over $\mathcal{C}^{\otimes}$ in the $\infty$-category of presentable $\infty$-categories. We consider $\varprojlim \mathcal{C}^{\bullet}$, where $\mathcal{C}^{\bullet}$ is the cobar construction associated with the map $\omega$. This acts as a bridge to go between $\operatorname{Rep}_A(G)$ and $\operatorname{Desc}_T(\mathcal{C})$. There are maps

$$\xymatrix{
&\operatorname{Rep}_A(G) \\
\varprojlim \mathcal{C}^{\bullet} \ar[ur] \ar[dr] \\
&\operatorname{Desc}_T(\mathcal{C})\\
}$$

\begin{theorem}\label{theorem}
Let $\mathcal{C}^{\otimes}$ be a symmetric monoidal stable $\infty$-category, $A$ an $\mathbb{E}_{\infty}$-ring spectrum and $\omega\in \operatorname{Fun}^{\otimes,L}(\mathcal{C}^{\otimes},\operatorname{Mod}_A^{\otimes})$ be a fiber functor inducing monad $T$ and comonad $K$,
$$\xymatrix{
\mathcal{C} \ar@(ul,dl)_{T} \ar[r]_{\omega} &\operatorname{Mod}_A \ar@(ur,dr)^{K} \ar@/_1pc/[l]_{\omega'} \\
}$$ 
If $\omega$ also preserves limits then the following are true:
\begin{enumerate}
\item The tannakization $\operatorname{Aut}^{\otimes}_{\omega}$ is representable by a derived affine group scheme,
\item There is an equivalence of $\infty$-categories $$\operatorname{Rep}_A(\operatorname{Aut}^{\otimes}_{\omega}) \simeq \operatorname{LComod}_K(\operatorname{Mod}_A)$$
\item There is a comparison map $\Phi:\operatorname{Rep}_A(\operatorname{Aut}^{\otimes}_{\omega}) \to \operatorname{Desc}_{\mathcal{C}}(T)$ so that there is a commutative diagram:
$$\xymatrix{
\mathcal{C} \ar[rr]^{Q_T} \ar@/_1pc/[dr]_{\widetilde{\omega}} &&\operatorname{Desc}_{\mathcal{C}}(T)\\
&\operatorname{Rep}_A(\operatorname{Aut}^{\otimes}_{\omega}) \ar[ur]_{\Phi}\\
}$$

\end{enumerate}
\end{theorem}

\begin{cor}
Under the hypothesis of Theorem \ref{theorem} the following are true.
\begin{enumerate}
\item $\omega'$ is monadic $\Longleftrightarrow \Phi$ is an equivalence (The descent category is Tannakian)
\item $\omega$ is comonadic $\Longleftrightarrow \widetilde{\omega}$ is an equivalence (The Tannakization is the motivic Galois group)
\end{enumerate}
\end{cor}

\section{Points of a symmetric monoidal $\infty$-category}

\subsection{Derived schemes} Regarding a scheme as a functor from rings to sets, satisfying certain sheaf conditions with respect to a Grothendieck topology, we want to replace both the source and target by $\infty$-categories. First, we can replace the target by the $\infty$-category of spaces. This broader notion of schemes encompasses the theory of stacks and higher stacks that arise while considering moduli problems in algebraic geometry which exhibit higher automorphisms. Restricting to the the $1$-skeleton of spaces would recover the classical theory of stacks. In order to go from higher stacks to derived stacks we can replace the source category by commutative ring objects in a symmetric monoidal $\infty$-category. There are several notions of derived rings to choose from: dg-algebras, simplicial commutative rings, connective $E_{\infty}$-rings etc. In this paper, our derived commutative rings will be $E_{\infty}$-ring spectra. With this as a starting point a derived scheme should correspond to an  $\infty$-functor $$E_{\infty}\operatorname{-rings} \to \mathcal{S}$$ satisfying certain sheaf conditions with respect to a Grothendieck topology on $E_{\infty}\operatorname{-rings}^{op}$. 

\begin{definition} 
A {\em derived pre-scheme} over $R$ is a left fibration of over $\operatorname{CAlg}_R$. Equivalently, via the Grothendieck construction, it can be expressed as a functor $$X:\operatorname{CAlg}_R \to \mathcal{S}.$$ We will denote by $\mathsf{PreSch}_R$ the $\infty$-category of derived preschemes over $R$. The Grothendieck construction gives an equivalence $\infty$-categories $$\mathsf{PreSch}_R \simeq \mathcal{LF}\operatorname{ib}(\operatorname{CAlg}_R).$$
\end{definition}

\subsubsection{Flat topologies} We define flat topologies {\em fpqc} and {\em fppf} over $\operatorname{CAlg}_S^{op}$. The algebraic notion of faithfully flat module corresponds to the topological notion of a faithful module. The algebraic notion of a module of finite presentation corresponds to the notion of a perfect module. 
\begin{definition}
Let $A$ be an $E_{\infty}$-ring. An $A$-module is $M$ is faithful if for every $A$-module $N$, $M\wedge_AN \simeq * \Rightarrow N\simeq *$.

A set of $A$-algebras $\{A\to B_i\}_{i\in I}$ is a {\em fpqc} cover (or faithful cover) of $A$ if for each $A$-module $N$ with $N\wedge_AB_i \simeq *$ for every $i$, we have $N \simeq *$. In particular, a single faithful $A$-algebra $B$ covers $A$ in this sense. 
\end{definition}

\begin{definition}
A set of $A$-algebras  $\{A \to B_i\}_{i \in I}$ is a {\em fppf} cover of $A$ if it is a faithful cover and every $B_i$ is a perfect $A$-module.
\end{definition}

\begin{definition} Let $\tau$ be a topology on $\operatorname{CAlg}_R^{op}$. A derived prescheme over $R$ is a sheaf in the $\tau$-topology if the the associated functor $X \in \operatorname{Fun}(\operatorname{CAlg}_R,\mathcal{S})$ satisfies the following properties:
\begin{enumerate}
\item If $\{A_i\}$ is a finite family of objects in $\operatorname{CAlg}_R$, then $X(\times_i A_i) \simeq \times_i X(A_i)$
\item Let $f:A \to B$ be a $\tau$-covering and let $C^{\bullet}(B/A)$ be the cobar complex assciated with $f$, then $$ X(\varprojlim C^{\bullet}(B/A)) \simeq \varprojlim X(C^{\bullet}(B/A)).$$
\end{enumerate}

A derived prescheme is a {\em derived scheme} if it is a sheaf for the flat topology.
\end{definition}

Denote by the $\mathsf{Sch}_R \subseteq \mathcal{LF}\operatorname{ib}(\operatorname{CAlg}_R)$ the full $\infty$-subcategory spanned by derived schemes over $R$. For any $A \in \operatorname{CAlg}_R$ we define $\operatorname{Spec}(A)$ to be the functor $\operatorname{CAlg}_R \to \mathcal \mathcal{S}$ co-representable by $A$, this functor is a scheme, $\operatorname{Spec}(R) \in \mathsf{Sch}_R$. We shall call $\operatorname{Spec}(A)$ a {\em derived affine scheme} over $R$. Let $\operatorname{Aff}_R \subseteq \mathsf{Sch}_R$ be the full $\infty$-subcategory spanned by derived affine schemes over $R$. In summary, there are are inclusions of $\infty$-categories $$\operatorname{Aff}_R \subseteq \mathsf{Sch}_R \subseteq \mathcal{LF}\operatorname{ib}(\operatorname{CAlg}_R).$$

\begin{definition}
A derived scheme $X$ is {\em algebraic} if it can be covered by a derived affine scheme $\mathsf{Spec}(A)$ and if it has {\em affine diagonal}. Equivalently, there exists a cosimplicial object $A^{\bullet}$ in $\operatorname{CAlg}_R$, so that $X$ is equivalent to the colimit of the simplicial derived affine scheme $\operatorname{Spec}(A^{\bullet})$ in $\mathcal{LF}\operatorname{ib}(\operatorname{CAlg}_R)$.
\end{definition}

\subsection{The moduli functor} Let $A$ be an $\mathbb{E}_{\infty}$-ring and let $\mathcal{C}^{\otimes}$ be a symmetric monoidal presentable stable $\infty$-category. We construct a $\infty$-functor $$\mathsf{M}_{\mathcal{C}^{\otimes}}:\operatorname{CAlg}_{\mathbb{S}} \to \mathcal{S}$$ associated with any $\mathcal{C}^{\otimes}$ that sends an $\mathbb{E}_{\infty}$-ring $B$ to the space $\operatorname{Fun}^{L,\otimes}(\mathcal{C}^{\otimes},\operatorname{Mod}_B^{\otimes})$ of left adjoint symmetric monoidal $\infty$-functors. 

\subsubsection{} There is a cocartesian fibration $\operatorname{LMod}(\operatorname{Sp}) \to \operatorname{Alg}(\operatorname{Sp})$ (informally for $R\to S \in \operatorname{Alg}(\operatorname{Sp})$ and $(R,M) \in \operatorname{LMod}(\operatorname{Sp})$, $M \to M \otimes_RS$ is the cocartesian edge lying over it). Thus via the Grothendieck construction it gives an $\infty$-functor $\operatorname{Alg}(\operatorname{Sp}) \to \operatorname{Cat}_{\infty}$ which factors through $\operatorname{Alg}(\operatorname{Sp}) \to Pr^{L,\sigma}$. It carries a $\mathbb{E}_{1}$-ring $R$ to  $\operatorname{LMod}_R(\operatorname{Sp})$, the $\infty$-category of left $R$-modules. This functor is extended to a functor between the $\infty$-categories of commutative algebra objects $$QC:\operatorname{CAlg}(\operatorname{Sp}) \to \operatorname{CAlg}(Pr^{L,\sigma})$$ which carries a $\mathbb{E}_{\infty}$-ring $A$ to $\operatorname{Mod}_A(\operatorname{Sp})$, the symmetric monoidal stable $\infty$-category of $A$-modules. We shall denote this category by $\operatorname{Mod}_A^{\otimes}$. More generally there is a $\infty$-functor $$QC:\operatorname{CAlg}_A \to \operatorname{CAlg}_A(Pr^{L,\sigma})$$ sending $A$-algebras to their $A$-linear categories of modules.

There is a co-cartesian fibration $\mathcal{QC} \to \operatorname{CAlg}_A$ classified by $QC$ which is obtained as a pullback of co-cartesian squares:
$$\xymatrix{
\mathcal{QC} \ar[r] \ar[d] &\operatorname{LMod}_A(\operatorname{Sp}) \ar[d] \\
\operatorname{CAlg}_A \ar[r] &\operatorname{Alg}_A(\operatorname{Sp})\\
}$$

Given a derived (pre)scheme $X$ over $A$ encoded as a co-cartesian fibration $X \to \operatorname{CAlg}_A$, we can define the $\infty$-category $QC(X)$ of quasi-coherent sheaves on $X$ to be maps of co-cartesian fibrations 
$$\xymatrix{
X \ar[dr] \ar[r] &\mathcal{QC} \ar[d]\\
&\operatorname{CAlg}_A}$$

$$QC(X) = \operatorname{Maps}_{\operatorname{co}\mathcal{CF}\operatorname{ib}(\operatorname{CAlg}_S)}(X,\mathcal{QC})$$

In general, any derived scheme $X$ can be written as a colimit of affine derived schemes $X \simeq \operatorname{colim}_{U \in \operatorname{Aff}_{/X}} U$. then one defines $QC(X)$ to be the limit, in the $\infty$-category of $\infty$-categories, of the corresponding diagram of $\infty$-categories $$QC(X) = \lim_{U \in \operatorname{Aff}_{/X}} QC(U).$$ The assigment of the category of quasi-coherent sheaves to a derived prescheme is $\infty$-functorial and takes values in the $\infty$-category of symmetric monoidal stable presetable $\infty$-categories. $$QC:\mathsf{PreSch}_S^{op} \to \operatorname{CAlg}(\mathcal{P}r^{L,\sigma})$$ When $X$ is algebraic, by choosing a cover $U \to X$ and the associated simplicial derived affine scheme $U_{\bullet} \to X$ one can describe $QC(X)$ as the totalization of the cosimplicial $\infty$-category $QC(U_{\bullet})$. 

\subsubsection{} Given a small $\infty$-category $\mathcal{C}$, the Yoneda map $Y_{\mathcal{C}}$ is an $\infty$-functor $\operatorname{Cat}_{\infty} \to \mathcal{S}$ sending $\infty$-catgeory $\mathcal{D}$ to the space $\operatorname{Fun}(\mathcal{C},\mathcal{D})$. This is encoded as the left fibration $\left(\operatorname{Cat}_{\infty}\right)_{\mathcal{C}/} \to \operatorname{Cat}_{\infty}$ of simpilcial sets. 

Given a symmetric monoidal stable $\infty$-category $\mathcal{C}^{\otimes}$. Consider the left fibration $$\left(\operatorname{CAlg}(Pr^{L,\sigma})\right)_{\mathcal{C}^{\otimes}/} \to \operatorname{CAlg}(Pr^{L,\sigma}).$$ The associated $\infty$-functor, denoted by $Y_{\mathcal{C}^{\otimes}}: \operatorname{CAlg}(Pr^{L,\sigma}) \to \mathcal{S}$, carries a symmetric monoidal stable $\infty$-category $\mathcal{D}^{\otimes}$ to the space of all left adjoint symmetric monoidal functors $\operatorname{Fun}^{L,\otimes}(\mathcal{C}^{\otimes}, \mathcal{D}^{\otimes})$.

\begin{definition} The derived (pre)scheme $\mathsf{M}_{\mathcal{C}^{\otimes}}$ is obtained as a composition of $\infty$-functors
$$\xymatrix{
\operatorname{CAlg}_{\mathbb{S}} \ar[r]^{QC} \ar@/_2pc/[rr]_{\mathsf{M}_{\mathcal{C}^{\otimes}}} &\operatorname{CAlg}(Pr^{L,\sigma}) \ar[r]^{Y_{\mathcal{C}^{\otimes}}} &\mathcal{S}\\
}$$
\end{definition}

Alternately, it is the $\infty$-functor associated with the left fibration of simplicial sets $\mathsf{M}_{\mathcal{C}^{\otimes}} \to \operatorname{CAlg}_{\mathbb{S}}$ where the following diagram is a pullback square of co-cartesian fibrations.
$$\xymatrix{
\mathsf{M}_{\mathcal{C}^{\otimes}} \ar[r] \ar[d] &\left(\operatorname{CAlg}(Pr^{L,\sigma})\right)_{\mathcal{C}^{\otimes}/} \ar[d]\\
\operatorname{CAlg}_{\mathbb{S}} \ar[r]_{QC}  &\operatorname{CAlg}(Pr^{L,\sigma})\\
}$$

\begin{prop}
For $\mathcal{C}$ a presentable stable symmetric monoidal $\infty$-category, the functor $\mathsf{M}_{\mathcal{C}}^{\otimes}$ is a sheaf in the {\em fppf} topology. 
\end{prop}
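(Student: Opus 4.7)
The plan is to verify the two axioms for a sheaf in the fppf topology directly, using the description $\mathsf{Spec}_{\mathcal{C}}(R) = \operatorname{Fun}^{\otimes,L}(\mathcal{C}, R\operatorname{-mod})$ established in Proposition \ref{Spec}.

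For the product axiom, given a finite family $\{A_i\}$ in $\operatorname{CAlg}_S$, I use that the product $E_\infty$-ring $\prod_i A_i$ satisfies $(\prod_i A_i)\operatorname{-mod}\simeq \prod_i A_i\operatorname{-mod}$ as an object of $\operatorname{CAlg}(\mathcal{P}r^{L,\sigma})$. Since a symmetric monoidal left-adjoint functor into a product of symmetric monoidal presentable stable $\infty$-categories is determined by its components, the identification
\[
\mathsf{Spec}_{\mathcal{C}}\bigl(\prod_i A_i\bigr) \simeq \prod_i \mathsf{Spec}_{\mathcal{C}}(A_i)
\]
is immediate.

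For the cover axiom, given an fppf cover $f\colon A\to B$ with Amitsur complex $B^{\otimes_A \bullet+1}$, my first step is to establish descent for the module categories themselves, namely an equivalence
\[
A\operatorname{-mod}\simeq \operatorname{Tot}\bigl(B^{\otimes_A \bullet+1}\operatorname{-mod}\bigr)
\]
in $\operatorname{CAlg}(\mathcal{P}r^{L,\sigma})$. Since $f$ is faithful and every $B_i$ is perfect, $B$ is dualizable as an $A$-module, so this is exactly the faithful dualizable descent of \cite{GD}. Granting this input, I observe that $\mathsf{Spec}_{\mathcal{C}}(-)$ is representable as the mapping space $\operatorname{Map}_{\operatorname{CAlg}(\mathcal{P}r^{L,\sigma})}(\mathcal{C},(-)\operatorname{-mod})$, and mapping spaces preserve limits in the target variable, yielding
\[
\operatorname{Tot}\bigl(\mathsf{Spec}_{\mathcal{C}}(B^{\otimes_A \bullet+1})\bigr) \simeq \operatorname{Map}_{\operatorname{CAlg}(\mathcal{P}r^{L,\sigma})}\bigl(\mathcal{C},\operatorname{Tot}(B^{\otimes_A \bullet+1}\operatorname{-mod})\bigr) \simeq \mathsf{Spec}_{\mathcal{C}}(A),
\]
which is precisely the required Amitsur descent for $\mathsf{Spec}_{\mathcal{C}}$.

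The hard part is the descent equivalence for the cosimplicial diagram of module categories, interpreted as a limit in $\operatorname{CAlg}(\mathcal{P}r^{L,\sigma})$ rather than merely in $\operatorname{Cat}_{\infty}$. The perfectness built into the definition of the fppf topology is exactly what makes $B$ dualizable over $A$ and brings the descent problem within the scope of faithful dualizable descent; once the result of \cite{GD} is admitted at this enriched level, the remaining manipulations---splitting of functors into a product and commutation of mapping spaces with limits---are formal.
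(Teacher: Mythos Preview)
Your proposal is correct and follows essentially the same route as the paper: invoke faithfully dualizable descent (the reference \cite{GD}) to obtain $A\operatorname{-mod}\simeq \lim C^{\bullet}(B/A)\operatorname{-mod}$, then use that $\mathsf{Spec}_{\mathcal{C}}(-)=\operatorname{Hom}_{\operatorname{CAlg}(\mathcal{P}r^{L,\sigma})}(\mathcal{C},(-)\operatorname{-mod})$ carries limits in the target to limits. Your treatment is in fact slightly more complete, since you also verify the finite-product axiom, which the paper's proof leaves implicit.
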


\begin{proof} 
We need to show that for a {\em fppf} cover $f:A \to B$ of $E_{\infty}$-rings $\mathsf{M}_{\mathcal{C}^{\otimes}}$ carries the cobar complex associated with $f$ to the limit of the associated cosimplicial diagram of spaces. Recall that the amitsur complex of $f$.

By faithfully dualizable descent if $f:A\to B$ is a {\em fppf}-cover in $\operatorname{CAlg}_S$, then $f$ is of effective descent for modules (see \cite[Thm 1.1]{GD}). Precisely there is an equivalence $$A\operatorname{-mod} \simeq \lim C^{\bullet}(B/A)\operatorname{-mod}$$ of $\infty$-categories in $\operatorname{CAlg}(\mathcal{P}r^{L,\sigma})$. Therefore,

\begin{equation*}
\begin{split}
\mathsf{M}_{\mathcal{C}^{\otimes}}(A) &= \operatorname{Fun}^{L,\otimes}(\mathcal{C},A\operatorname{-mod})\\
&\simeq \operatorname{Hom}_{\operatorname{CAlg}(\mathcal{P}r^L)}(\mathcal{C},\lim C^{\bullet}(B/A)\operatorname{-mod}) \\
&\simeq \lim \operatorname{Hom}_{\operatorname{CAlg}(\mathcal{P}r^L)}(\mathcal{C}, C^{\bullet}(B/A)\operatorname{-mod})\\
&= \lim \mathsf{M}_{\mathcal{C}^{\otimes}}(C^{\bullet}(B/A))\\
\end{split}
\end{equation*} 

\end{proof}
\subsubsection{Pullbacks} Given a map of symmetric monoidal stable $\infty$-categories $\mathcal{C}^{\otimes} \to \mathcal{D}^{\otimes}$, there is a induced map of derived schemes $\mathsf{M}_{\mathcal{D}^{\otimes}} \to \mathsf{M}_{\mathcal{C}^{\otimes}}$. Then the following is a pullback diagram in the $\infty$-category $\mathsf{Sch}_{\mathbb{S}}$.
$$\xymatrix{
\mathsf{M}_{\left(\mathcal{D}\otimes_{\mathcal{C}}\mathcal{D}\right)^{\otimes}} \ar[r] \ar[d] &\mathsf{M}_{\mathcal{D}^{\otimes}} \ar[d]\\
\mathsf{M}_{\mathcal{D}^{\otimes}} \ar[r] &\mathsf{M}_{\mathcal{C}^{\otimes}}\\
}$$

This follows from the pullback square in $\operatorname{Cat}_{\infty}$
$$\xymatrix{
\left(\operatorname{CAlg}(Pr^{L,\sigma})\right)_{\left(\mathcal{D}\otimes_{\mathcal{C}}\mathcal{D}\right)^{\otimes}/} \ar[d] \ar[r] &\left(\operatorname{CAlg}(Pr^{L,\sigma})\right)_{\mathcal{D}^{\otimes}/} \ar[d]\\
\left(\operatorname{CAlg}(Pr^{L,\sigma})\right)_{\mathcal{D}^{\otimes}/} \ar[r] &\left(\operatorname{CAlg}(Pr^{L,\sigma})\right)_{\mathcal{C}^{\otimes}/} \\
}$$

\subsection{Tannakian formalism} There is a symmetric monoidal $\infty$-functor $$\mathsf{M}:\operatorname{CAlg}(Pr^{L,\sigma}) \to \mathsf{PreSch}_{\mathbb{S}}^{op}$$ sending $\mathcal{C}^{\otimes}$ to the derived (pre)scheme $\mathsf{M}_{\mathcal{C}^{\otimes}}$. The functor $\mathsf{M}$ is left adjoint to $QC$.

The construction of the functor $\mathsf{M}$ is related to Tannakian formalism in derived algebraic geometry. The Tannakian formalism attempts to identify the image of $$QC:\mathsf{AlgSch}_{\mathbb{S}}\to \operatorname{CAlg}(Pr^{L,\sigma})$$ (recognition) and possibly recontruct $X$ from $QC(X)^{\otimes}$. 

\begin{enumerate}

\item {\em Recognition.} Given a symmetric monoidal stable $\infty$-category $\mathcal{C}^{\otimes}$, we note that there is a counit map of the adjunction, $QC(\mathsf{M}_{\mathcal{C}^{\otimes}}) \to \mathcal{C}^{\otimes}$. This is always an equivalence. Therefore the recognition problem is related to the question of algebraicity of $\mathsf{M}_{\mathcal{C}^{\otimes}}$. We do not address this question in this generality here. However we can show that under certain finiteness restrictions, $\mathsf{M}_{\mathcal{C}^{\otimes}}$ has affine diagonal.

\item {\em Reconstruction.} Given a derived (pre)scheme $X$, there is a unit of the adjunction, $X \to \mathsf{M}_{QC(X)^{\otimes}}$.  The reconstruction problem is related to the question when is this map an equivalence. We do not address this question here. However we note that for derived affine schemes the reconstruction works. Let $A$ be an $\mathbb{E}_{\infty}$-ring, then $$\mathsf{Spec}\,A \to \mathsf{M}_{\operatorname{Mod}_A^{\otimes}}$$ is an equivalence. This is a consequence of the higher algebra version of the Eilenberg-Watts theorem which states that given $\mathbb{E}_1$-rings $A$ and $B$, there is an equivalence of $\infty$-categories $$\operatorname{Fun}^L(\operatorname{Mod}_A, \operatorname{Mod}_B) \simeq {}_A\operatorname{BiMod}_B(\operatorname{Sp})$$ which under the symmetric monoidal restriction reduces to the equivalence of spaces $$\operatorname{Fun}^{L,\otimes}(\operatorname{Mod}_A^{\otimes}, \operatorname{Mod}_B^{\otimes}) \simeq \operatorname{Map}_{\mathbb{E}_1\operatorname{-rings}}(A,B).$$

\end{enumerate}

\section{The Tannakization group scheme}

\subsection{Derived group schemes}

From the functor of points perspective an ordinary group scheme is a group valued functor in the category of commutative rings so that the underlying set valued functor can be represented by a scheme. A derived group scheme is an $\infty$-functor from the $\infty$-category of $\mathbb{E}_{\infty}$-rings to group objects in $\mathcal{S}$, so that the underlying $\mathcal{S}$-valued functor is a derived scheme. The group objects in $\mathcal{S}$ are group-like $\mathbb{E}_{1}$-spaces.

\begin{definition}
Let $A$ be an $\mathbb{E}_{\infty}$-ring. A {\em derived group scheme} $\mathbb{G}$ over $A$, is a functor 
$$\mathbb{G}:\operatorname{CAlg}_A \to \operatorname{Grp}(\mathcal{S})$$ so that the underlying derived prescheme $\operatorname{CAlg}_A \to \operatorname{Grp}(\mathcal{S}) \to \mathcal{S}$ is representable by a derived scheme $X$. If $X$ is affine, then $G$ is a {\em derived affine group scheme}.

Denote by $\mathsf{GrpSch}_A$ the $\infty$-category of group schemes over $A$.
\end{definition}

\subsubsection{}\label{groupsch}
The $\infty$-category $\operatorname{Grp}(\mathcal{C})$ is the full subcategory of $\operatorname{Fun}(N(\Delta)^{op},\mathcal{S})$ spanned by group objects. A functor $\mathbb{G}:\operatorname{CAlg}_A \to \operatorname{Fun}(N(\Delta)^{op},\mathcal{S})$ is equivalent to a functor $\mathbb{G}':N(\Delta)^{op}\to \operatorname{Fun}(\operatorname{CAlg}_A,\mathcal{S})$. The functor $\mathbb{G}$ factors through $\operatorname{Grp}(\mathcal{S})$ is equivalent to $\mathbb{G}'$ being a group object in $\operatorname{Fun}(\operatorname{CAlg}_A,\mathcal{S})$. There is an equivalence $$\operatorname{Fun}(\operatorname{CAlg}_A,\operatorname{Grp}(\mathcal{S})) \simeq \operatorname{Grp}(\operatorname{Fun}(\operatorname{CAlg}_A,\mathcal{S})).$$ A object in $\operatorname{Grp}(\operatorname{Fun}(\operatorname{CAlg}_A,\mathcal{S}))$ is a derived group scheme if the image under the map $$\operatorname{Grp}(\operatorname{Fun}(\operatorname{CAlg}_A,\mathcal{S})) \to \operatorname{Fun}(\operatorname{CAlg}_R,\mathcal{S})$$ is a derived scheme. Therefore a derived group scheme over $A$ is a group object in the $\infty$-category of derived schemes over $A$. There is an equivalence of categories $$\mathsf{GrpSch}_A \simeq \operatorname{Grp}(\mathsf{Sch}_A).$$

A derived affine group scheme over $A$ is thus an object in $\operatorname{Grp}(\mathsf{Aff}_A)$. An affine group scheme is equivalent to a functor $F:N(\Delta) \to \operatorname{CAlg}_A$ so that $F^{op}:N(\Delta)^{op} \to \mathsf{Aff}_A$ is a group object in $\mathsf{Aff}_A$. Therefore there is a natural equivalence $$\operatorname{CHopf}_A^{op} \simeq \operatorname{Grp}(\mathsf{Aff}_A).$$

\subsubsection{Group scheme actions on $\infty$-categories}

Let $G \in \operatorname{Grp}(\mathcal{S})$. Given a presentable $\infty$-category and an object $X \in \mathcal{C}$. A $G$-action on $X$ is a morphism $G \otimes X \to X$ in $\mathcal{C}$ that satisfies the usual group action axioms upto coherent homotopies. This can be made precise in the following way. 

\begin{definition}
Let $\mathsf{B}G$ be the classifying space of $G$. This is an $\infty$-groupoid. Then a $G$-action on $X$ is a functor of $\infty$-categories $$f:\mathsf{B}G \to \mathcal{C}$$ so that the object in $\mathsf{B}G$ maps to $X \in \mathcal{C}$. 

Alternatively, let $\mathsf{B}\operatorname{Aut}_X(\mathcal{C}) \subseteq \mathcal{C}$ be the full sub $\infty$-groupoid spanned by $X$. Then a $G$ action on $X$ is a map of $\infty$-groupoids $$\mathsf{B}G \to \mathsf{B}\operatorname{Aut}_X(\mathcal{C}).$$

The $\infty$-category $\operatorname{Fun}(\mathsf{B}G,\mathcal{C})$ is the category of $G$-objects in $\mathcal{C}$. 

Define $X^{hG} = \lim(f)$. The simpicial model for $\mathsf{B}G$ gives rise to a {\em group cobar complex} $C^{\bullet}(G;X)$, which is a cosimplicial object in $\mathcal{C}$. The fixed points $X^{hG} \simeq \operatorname{Tot}(C^{\bullet}(G;X))$.
\end{definition}

\begin{remark}({\em Group action on an $\infty$-category})\label{fixedcategory}
The action of a group $G$ on an $\infty$-category $\mathcal{C}$ is given by a functor $$\mathsf{B}G \to \mathsf{B}\operatorname{Aut}_{\mathcal{C}}(\operatorname{Cat}_{\infty}).$$ An object $X$ of $\mathcal{C}$ will be called a {\em $G$-equivariant object of $\mathcal{C}$} if $X$ is an object of $\mathcal{C}^{hG}$.

Informally, the objects of $\mathcal{C}^{hG}$ consist of the following data:
\begin{enumerate}

\item An object $X \in \mathcal{C}$
\item An equivalence $\phi_{g,X}:g.X \to X$ for all $g\in G$
\item A $2$-simplex $\xymatrix{&X\\ g_1.X \ar[ur]^{\phi_{g_1,X}} &&g_2g_1.X \ar[ul]_{\phi_{g_1g_2,X}} \ar[ll]^{g_2,g_1.X}}$ for all $(g_1,g_2) \in G^2$
\item $\cdots$
\end{enumerate}
\end{remark}

\subsubsection{}
Given a derived group scheme $\mathbb{G}:\operatorname{CAlg}_R \to \operatorname{Grp}(\mathcal{S})$, and a presentable $\infty$-category $\mathcal{C}$. A $\mathbb{G}$ action on $X \in \mathcal{C}$ is a family of functors $\mathsf{B}(\mathbb{G}_A) \to \mathsf{B}\operatorname{Aut}_X(\mathcal{C})$ for every $A \in \operatorname{CAlg}_R$ and a for every map $A \to B$ in $\operatorname{CAlg}_R$ diagrams 
$$\xymatrix{
\mathsf{B}(\mathbb{G}_A) \ar[d] \ar[r] &\mathsf{B}\operatorname{Aut}_{X}(\mathcal{C})\\
\mathsf{B}(\mathbb{G}_B) \ar[ur] 
}$$ which commute upto coherent homotopies. The following definition will make this precise.

\begin{definition}
Given a derived group scheme $\mathbb{G}$ over $R$, the {\em classifying stack} is a co-cartesian fibration of $\infty$-categories
$$\xymatrix{
\mathsf{B}\mathbb{G} \ar[d]^p \\
\operatorname{CAlg}_R
}$$
which under the Grothendieck construction corresponds to the functor $\operatorname{CAlg}_R \to \operatorname{Cat}_{\infty}$ that acts on objects by taking $A$ to the $\infty$-category $\mathsf{B}\mathbb{G}_A$. 
\end{definition}

\begin{definition}
Given an object $X$ of an $\infty$-category $\mathcal{C}$, let $p_0:\underline{\mathsf{B}\operatorname{Aut}_X(\mathcal{C})} \to \operatorname{CAlg}_R$ denote the {\em constant} co-cartesian fibration. An action of $\mathbb{G}$ on $X$ is a map of co-cartesian fibrations 
$$\xymatrix{
\mathsf{B}\mathbb{G}\ar[dr]_p \ar[rr] &&\underline{\mathsf{B}\operatorname{Aut}_X(\mathcal{C})} \ar[dl]^{p_0}\\
&\operatorname{CAlg}_R
}$$
\end{definition}

\subsubsection{Category of Representations} Let $\mathbb{G}$ be a derived group scheme over an $\mathbb{E}_{\infty}$-ring $R$. A $\mathbb{G}$-representation over $R$ is informally an $R$-module $N$ with a $\mathbb{G}$-action. By earlier discussion this is encoded by a coherent family of $\infty$-functors $$B(\mathbb{G}_A) \to B\operatorname{Aut}_N(\operatorname{Mod}_R)$$ for $A \in \operatorname{CAlg}_R$. This is expressed as a map of co-cartesian fibrations $\phi:B\mathbb{G} \to \underline{B\operatorname{Aut}_N(\operatorname{Mod}_R)}$. 

\begin{definition}
The category of $\mathbb{G}$-representations in $R$,
$$\operatorname{Rep}_R(\mathbb{G}) := \left( \operatorname{Mod}_R \right)^{h\mathbb{G}}  = \varprojlim \phi.$$
\end{definition}

There is an explicit formula for computing this. The derived group scheme $\mathbb{G}$ has an underlying scheme $\mathbb{G}$. The group structure is encoded as a simplicial object $\mathbb{G}_{\bullet}\in \operatorname{Fun}(N(\Delta^{op}),\mathsf{Sch}_R)$. Then, 
$$\operatorname{Rep}_R(\mathbb{G}) \simeq \varprojlim QC(\mathbb{G}_{\bullet}).$$

If $\mathbb{G}$ is a derived {\em affine} group scheme $\mathsf{Spec}\, \mathcal{H}$, where $\mathcal{H}$ is a Hopf algbebra over $R$, the category of $\mathbb{G}$-representations over $R$, $$\operatorname{Rep}_R(\mathbb{G}) \simeq \operatorname{LComod}_{\mathcal{H}}(\operatorname{Mod}_R).$$

\subsection{Pointed derived schemes and loop schemes}
\begin{definition} A derived (pre)scheme $X$ over $R$ is said to be pointed if when considered as an $\infty$-functor $X:\operatorname{CAlg}_R \to \mathcal{S}$, $X$ lifts to an $\infty$-functor taking values in $\mathcal{S}_*$, the $\infty$-category of pointed spaces: the following commutative diagram exists in $\operatorname{Cat}_{\infty}$.
$$\xymatrix{
&\mathcal{S}_* \ar[d]\\
\operatorname{CAlg}_R \ar[r]_{X} \ar@{.>}[ur] &\mathcal{S}\\
}$$
Equivalently, the structure map $X \to \mathsf{Spec}\,R$ admits a section.
\end{definition}

\subsubsection{}To any pointed derived scheme $X$ over $R$, we can associate a derived group scheme $\Omega X$ over $R$, defined as the composition of $\infty$-functors 
$$\xymatrix{
\operatorname{CAlg}_R \ar[r]^X \ar@/_1pc/[rr]_{\Omega X} &\mathcal{S}_* \ar[r]^{\Omega_*} &\operatorname{Grp}(\mathcal{S})\\
}$$
where $\Omega_*:\mathcal{S}_* \to \operatorname{Grp}(\mathcal{S})$ is the $\infty$-functor sending a pointed space $i:\Delta^0 \to X$ to its space of based loops $\Omega_*X = \operatorname{\check{C}ech}(\Delta^0\to X)$.

Being a pointed derived scheme, $X$ admits a section $\mathsf{Spec}\,R \to X$. The derived loop scheme $\Omega X$ is the derived group scheme over $\mathsf{Spec}\,R$ arising from the \v{C}ech nerve of the map $\mathsf{Spec}\,R \to X$.
$$\Omega X \simeq \operatorname{\check{C}ech}(\mathsf{Spec}\,R \to X)$$ The underlying derived scheme is the pullback $\mathsf{Spec}\,R \times_{X} \mathsf{Spec}\,R.$

\begin{remark}
Let $A$ be an $\mathbb{E}_{\infty}$-algebra over $R$ such that $X \times_{\mathsf{Spec}\,R} \mathsf{Spec}\,A$ is pointed. Then the projection map $X\times_{\mathsf{Spec}\,R}\mathsf{Spec}\,A \to \mathsf{Spec}\,A$ has a section $\mathsf{Spec}\,A \to X \times_{\mathsf{Spec}\,R} \mathsf{Spec}\,A$ (which is identity in the second component). 

The loop scheme $$\Omega(X \times_{\mathsf{Spec}\,R} \mathsf{Spec}\,A) = \operatorname{\check{C}ech}\left(\mathsf{Spec}\,A \to X \times_{\mathsf{Spec}\,R} \mathsf{Spec}\,A\right)$$ has underlying derived scheme is  $\mathsf{Spec}\,A \underset{X \times_{\mathsf{Spec}\,R} \mathsf{Spec}\,A}{\times} \mathsf{Spec}\,A$ over $\mathsf{Spec}\,A$. However since the section is identity on $\mathsf{Spec}\,A$, there is an equivalence $$\mathsf{Spec}\,A \underset{X \times_{\mathsf{Spec}\,R} \mathsf{Spec}\,A}{\times} \mathsf{Spec}\,A \simeq \mathsf{Spec}\,A \times_X\mathsf{Spec}\,A$$ of derived schemes over $\mathsf{Spec}\,A$ and the loop scheme $$\Omega(X \times_{\mathsf{Spec}\,R}\mathsf{Spec}\,A) \simeq \operatorname{\check{C}ech}\left(\mathsf{Spec}\,A \to X\right).$$

\end{remark}

\subsection{Construction of $\operatorname{Aut}^{\otimes}_{\omega}$}
Let $A$ be an $\mathbb{E}_{\infty}$-ring and let $\mathcal{C}^{\otimes}$ be a $A$-linear symmetric monoidal presentable stable $\infty$-category. Given a fiber functor $\omega \in \operatorname{Fun}^{L,\otimes}(\mathcal{C}^{\otimes}, \operatorname{Mod}_A)$, we construct a derived group scheme $\operatorname{Aut}^{\otimes}_{\omega}$ over $\mathsf{Spec}\,A$. This is expressed as an $\infty$-functor $$\operatorname{Aut}^{\otimes}_{\omega}: \operatorname{CAlg}_A \to \operatorname{Grp}(\mathcal{S})$$ that sends a commutative $A$-algebra $B$ to the group object $\Omega_*\operatorname{Fun}^{L,\otimes}(\mathcal{C}^{\otimes},\operatorname{Mod}_B^{\otimes})$ of loops based at the map $\mathcal{C}^{\otimes} \to \operatorname{Mod}_A^{\otimes} \to \operatorname{Mod}_B^{\otimes}$, the first map is $\omega$, and the second map is induced by the $A$-algebra structure map $A\to B$ on $B$. 

\begin{definition}
Given a $\mathcal{C}^{\otimes}$ with a fiber functor $\omega$, the derived scheme $\mathsf{M}_{\mathcal{C}^{\otimes}}\times_{\mathsf{Spec}\,\mathbb{S}} \mathsf{Spec}\,A$ is pointed, and therefore lifts to a functor $\operatorname{CAlg}_A \to \mathcal{S}_*$. Define $\operatorname{Aut}^{\otimes}_{\omega}$ to be the associated derived loop scheme $$\operatorname{Aut}^{\otimes}_{\omega} = \Omega(\mathsf{M}_{\mathcal{C}^{\otimes}}\times_{\mathsf{Spec}\,\mathbb{S}}\mathsf{Spec}\,A).$$
\end{definition}

\begin{equation}
\xymatrix{
\operatorname{CAlg}_A \ar@/^2pc/[rr]^{\mathsf{M}_{\mathcal{C}^{\otimes}}\times_{\mathsf{Spec}\,\mathbb{S}}\mathsf{Spec}\,A} \ar[r]^{QC} \ar[drr] \ar@/_1pc/[ddrrr]_{\operatorname{Aut}^{\otimes}_{\omega}} &\operatorname{CAlg}(Pr^{L,\sigma}) \ar[r]^{Y_{\mathcal{C}^{\otimes}}} &\mathcal{S} \\
&&\mathcal{S}_* \ar[u]^{\operatorname{forget}} \ar[dr]^{\Omega_*}\\
&&& \operatorname{Grp}(\mathcal{S})\\
}
\end{equation}

\subsubsection{} The fiber functor $\omega$ corresponds to a map of derived schemes $\mathsf{Spec}\,A \to \mathsf{M}_{\mathcal{C}^{\otimes}}$ and the automorphism group scheme is the derived group scheme arising from the \v{C}ech nerve associated with this map.
$$\operatorname{Aut}^{\otimes}_{\omega} \simeq \operatorname{\check{C}ech}(\mathsf{Spec}\,A \to \mathsf{M}_{\mathcal{C}^{\otimes}})$$
The underlying derived scheme is the pullback $\mathsf{Spec}\,A \times_{\mathsf{M}_{\mathcal{C}^{\otimes}}} \mathsf{Spec}\,A.$ 

The cobar construction associated with $\omega$ is a (co-augmented) cosimplicial symmetric monoidal stable $\infty$-category 
$$\xymatrix{
\operatorname{Cobar}(\omega) = \operatorname{Mod}_A^{\otimes} \ar[r] \ar@<1ex>[r] &\left( \operatorname{Mod}_A\otimes_{\mathcal{C}} \operatorname{Mod}_A \right) ^{\otimes} \ar[r] \ar@<1ex>[r] \ar@<-1ex>[r] &\left(\operatorname{Mod}_A \otimes_{\mathcal{C}} \operatorname{Mod}_A\otimes_{\mathcal{C}}\operatorname{Mod}_A \right)^{\otimes} \ar[r] \ar@<1ex>[r] \ar@<-1ex>[r] \ar@<2ex>[r] &\cdots\\
}$$ The pullback property of $\mathsf{M}$ means this is equivalent as a derived scheme to $\mathsf{M}_{\left(\operatorname{Mod}_A\otimes_{\mathcal{C}}\operatorname{Mod}_A\right)^{\otimes}}$. The group structure is encoded in the simplicial derived scheme $$\mathsf{M}_{\operatorname{Cobar}(\omega)}.$$

\subsubsection{Representations of the Tannakization}\label{repcobar}
\begin{equation}
\begin{split}
\operatorname{Rep}_A(\operatorname{Aut}_{\omega}^{\otimes}) &\simeq \varprojlim QC(\operatorname{\check{C}ech}(\mathsf{Spec}\,A \to \mathsf{M}_{\mathcal{C}^{\otimes}}))\\
&\simeq \varprojlim \operatorname{Cobar}(\omega)\\
\end{split}
\end{equation}

\subsubsection{}
Let $f:A\to B$ be a map of $\mathbb{E}_{\infty}$-rings and let $f^*:\operatorname{Mod}_A^{\otimes} \to \operatorname{Mod}_B^{\otimes}$ be the associated pullback symmetric monoidal functor. This is a fiber functor on $\operatorname{Mod}_A^{\otimes}$ the tannakization of which is the derived affine group scheme $\mathsf{Spec}\,(B\otimes_AB)$ over $\mathsf{Spec}\,B$. The group structure arises from the \v{C}ech nerve associated to $\mathsf{Spec}\,B \to \mathsf{Spec}\,A$.

Slightly more generally, let $Y$ be a derived scheme over $R$ and let $\mathsf{Spec}\,R \to Y$ be a section of the structure map. Then the associated pullback $QC(Y)^{\otimes} \to \operatorname{Mod}_R^{\otimes}$ is a fiber functor. The tannakization of this is equivalent to the derived affine group scheme over $\mathsf{Spec}\,R$ arising from the \v{C}ech nerve of $\mathsf{Spec}\,R \to Y$ (see \cite{IwanariBar}).

\subsection{Fiber functor as a bimodule}

\subsubsection{} By the Eilenberg-Watts theorem for $\mathbb{E}_1$-rings $R$ and $S$, any fiber functor $\operatorname{Mod}_R^{\otimes} \to \operatorname{Mod}_S^{\otimes}$ on $\operatorname{Mod}_R^{\otimes}$ is obtained by tensoring with a $R\operatorname{-}S$-bimodule. The $\infty$-functor that sends a bimodule $M$ to $-\otimes_SM$ gives an equivalence of $\infty$-categories $$\xymatrix{\operatorname{RMod}_S(\operatorname{LMod}_R^{op}) = {}_R\operatorname{BiMod}_S \ar[r]^{\simeq} &\operatorname{Fun}^L(\operatorname{LMod}_R,\operatorname{RMod}_S)}$$

We give a similar characterization of more general fiber functors. Let $\mathcal{C}^{\otimes}$ be a symmetric monoidal stable $\infty$-category and $A$ be a $\mathbb{E}_{\infty}$-ring. 

\begin{prop}
There is an $\infty$-functor $$\operatorname{Mod}_A(\mathcal{C}^{op}) \to \operatorname{Fun}^{R}(\mathcal{C},\operatorname{Mod}_A)$$ sending a $A$-module $M$ in $\mathcal{C}$ to $\operatorname{Hom}_{\mathcal{C}}(M,-)$ which is an equivalence of $\infty$-categories. Restricting to the sub $\infty$-category of compact objects induces and equivalence of $\infty$-categories
$$\operatorname{Mod}_A(\mathcal{C}^{op})^c \simeq \operatorname{Fun}^{L,R}(\mathcal{C},\operatorname{Mod}_A).$$
\end{prop}

\begin{proof}
Apply \cite[Prop.4.8.1.17]{HA}, \cite[Thm.4.8.4.6]{HA}
\end{proof}

Therefore any fiber functor on $\mathcal{C}^{\otimes} \to \operatorname{Mod}_A^{\otimes}$ that preserves limits is equivalent to $\operatorname{Hom}_{\mathcal{C}}(X,-)$ for some $X \in \operatorname{Mod}_A(\mathcal{C}^{op})$. Also, given a commutative $A$-algebra $A \to B$, the composite $\mathcal{C}^{\otimes} \to \operatorname{Mod}_A^{\otimes} \to \operatorname{Mod}_B^{\otimes}$ is the functor $\operatorname{Hom}_{\mathcal{C}}(X\otimes_AB,-)$. 

\begin{definition}
A fiber functor $\omega:\mathcal{C}^{\otimes} \to \operatorname{Mod}_A^{\otimes}$ is said to be {\em compact} if it preserves limits.
\end{definition}

\subsubsection{} Using the identification of limit preserving fiber functors with compact module objects we get an alternate description of the Tannakization group scheme.

\begin{theorem}\label{affine}
Let $\omega:\mathcal{C}^{\otimes} \to \operatorname{Mod}_A^{\otimes}$ be a compact fiber functor. Then the tannakization functor is representable by a derived affine group scheme over $A$.
\end{theorem}

\begin{proof}
Let us use the notation $\operatorname{Map}_A(X,Y)$ for $\operatorname{Hom}_{\operatorname{Mod}_A(\mathcal{C}^{op})^c}(X,Y)$ and $\operatorname{Iso}_A(X,Y)$ for $\operatorname{Iso}_{\operatorname{Mod}_A(\mathcal{C}^{op})^c}(X,Y)$. Here, for an $\infty$-category $\mathcal{D}$, $\operatorname{Iso}_{\mathcal{D}}(-,-) \subset \operatorname{Hom}_{\mathcal{D}}(-,-)$ is the full subcategory spanned by morphisms which become isomorphisms in $h\mathcal{D}$.
\begin{equation}
\begin{split}
\operatorname{Aut}^{\otimes}_{\omega}(B) &= \operatorname{Iso}_B(\omega\otimes_AB,\omega\otimes_AB)\\
&\simeq \operatorname{Iso}_A(\omega,\omega\otimes_AB)\\
&\simeq \left(\operatorname{Iso}_A(\omega,\omega)\right)\otimes_AB\\
&\simeq \operatorname{Hom}_{\operatorname{Mod}_A}(\left(\operatorname{Iso}_A(\omega,\omega)\right)^{\vee}, B)\\
&\simeq \operatorname{Hom}_{\operatorname{CAlg}_A}(\operatorname{Symm}^*\left(\left(\operatorname{Iso}_A(\omega,\omega)\right)^{\vee}\right), B)\\
\end{split}
\end{equation}

The first equivalence follows from a base change left adjoint $-\otimes_AB:\operatorname{Mod}_A(\mathcal{C}) \to \operatorname{Mod}_B(\mathcal{C})$. The stable $\infty$-category $\operatorname{Mod}_A$ is generated by colimits, $\otimes_A$ preserves colimits and $\omega$ is compact. This gives the second equivalence. Here $\operatorname{Symm}^*(N)$ is the free $\mathbb{E}_{\infty}$-algebra generated by $N$.

Therefore $\infty$-functor $\operatorname{Aut}_{\omega}^{\otimes}:\operatorname{CAlg}_A\to \mathcal{S}$ is representable by the $A$-algebra $$\mathcal{H} = \operatorname{Symm}^*\left(\operatorname{Iso}_A(\omega,\omega)^{\vee}\right).$$ Since the underlying derived scheme is a derived group scheme the commutative $A$-algebra $\mathcal{H}$ is a commutative $A$-Hopf algebra.
\end{proof}

\subsubsection{}
The tensor product of stable $\infty$-categories is a category of modules.
\begin{equation}
\begin{split}
\left(\operatorname{Mod}_A\otimes_{\mathcal{C}}\operatorname{Mod}_A\right)^{\otimes} &= QC(\operatorname{Spec}\,A\times_{\mathsf{M}_{\mathcal{C}^{\otimes}}}\operatorname{Spec}\,A)\\
&= QC(\operatorname{Aut}^{\otimes}_{\omega})^{\otimes}\\
&\simeq \operatorname{Mod}_{\mathcal{H}}^{\otimes}\\
\end{split}
\end{equation}
 
This shows $\mathsf{M}_{\mathcal{C}^{\otimes}}$ has $QC$-affine diagonal for compact maps. Let $\operatorname{Spec}\,A \to \mathsf{M}_{\mathcal{C}^{\otimes}}$ be the map defined by a symmetric monoidal $\infty$-functor $\mathcal{C}^{\otimes} \to \operatorname{Mod}_A^{\otimes}$ preserving limits and colimits. Then,
\begin{equation}
\begin{split}
QC(\operatorname{Spec}\,A \times_{\mathsf{M}_{\mathcal{C}^{\otimes}}} \operatorname{Spec}\,B)^{\otimes} &= \left(\operatorname{Mod}_A\otimes_{\mathcal{C}}\operatorname{Mod}_B\right)^{\otimes}\\
&\simeq \left(\operatorname{Mod}_A\otimes_{\mathcal{C}}\operatorname{Mod}_A\right)^{\otimes}\otimes_A\operatorname{Mod}_B^{\otimes}\\
&\simeq \left(\operatorname{Mod}_{\mathcal{H}\otimes_AB}\right)^{\otimes}\\
\end{split}
\end{equation}

\section{Monadic descent and Tannakization} 

\subsection{The descent category of a monad}
Let $\mathcal{C}$ be an $\infty$-category and $T\in \operatorname{Alg}(\operatorname{Fun}(\mathcal{C},\mathcal{C}))$ be a monad on $\mathcal{C}$. One defines the {\em descent category $\operatorname{Desc}_{\mathcal{C}}(T)$ for $T$ in $\mathcal{C}$} as a category fo comodules in the category of $T$-modules $\operatorname{LMod}_T(\mathcal{C})$. One should think of objects in $\operatorname{Desc}_{\mathcal{C}}(T)$ as $T$-modules $+$ ``descent data". The definition goes as follows:

\begin{equation}
\xymatrix{
&&\operatorname{LComod}_{K_T}(\operatorname{LMod}_T(\mathcal{C})) \\
\mathcal{C} \ar@(ul,dl)_{T} \ar[dr]^{F_T}  \ar[rr]^{Q_T} &&\operatorname{Desc}_{\mathcal{C}}(T) \ar@{=}[u]^{\operatorname{def}}\ar[dl]_{U_{K_T}}\\ 
&\operatorname{LMod}_T(\mathcal{C}) \ar@/_1pc/[ur]_{F_{K_T}} \ar@/^1pc/[ul]^{U_T} \ar@(dl,dr)_{K_T:=F_T\circ U_T}\\
}
\end{equation}

We unpack the above diagram. First, $T$ is a monad on the $\infty$-category $\mathcal{C}$. The $\infty$-category $\mathcal{C}$ is left-tensored over $\operatorname{Fun}(\mathcal{C},\mathcal{C})$ and $\operatorname{LMod}_{T}(\mathcal{C})$ is the $\infty$-category of left $T$-modules in $\mathcal{C}$. It is related to the original category $\mathcal{C}$ by the Eilenberg-Moore adjunction $F_T:\mathcal{C} \leftrightarrows \operatorname{LMod}_T(\mathcal{C}):U_T$, where $F_T$ is the free $T$-module functor and the $U_T$ is the forgetful functor. This functor realizes $T$ in the sense that $U_T\circ F_T = T$. The other composition $K_T = F_T\circ U_T$ defines a comonad on $\operatorname{LMod}_T(\mathcal{C})$. The descent category is defined to be the $\infty$-category of left comodules in $\operatorname{LMod}_T(\mathcal{C})$ with respect to the comonad $K_T$. 

The $\infty$-category $\operatorname{Desc}_{\mathcal{C}}(T) = \operatorname{LComod}_{K_T}(\operatorname{LMod}_T(\mathcal{C}))$ is again related to the original category $\operatorname{LMod}_T(\mathcal{C})$ by a co-Eilenberg-Moore adjunction $F_{K_T}:\operatorname{LMod}_T(\mathcal{C}) \rightleftarrows \operatorname{Desc}_{\mathcal{C}}(T): U_{K_T}$. This realizes the comonad $K_T$ in that $U_{K_T}\circ F_{K_T} = K_T$. This induces a functor $$Q_T: \mathcal{C} \to \operatorname{Desc}_{\mathcal{C}}(T).$$ The monad $T$ is a said to satisfy {\em effective descent in $\mathcal{C}$} when the functor $Q_T$ is an equivalence.

\subsubsection{} In the situation where the monad $T$ on $\mathcal{C}$ is induced by an adjunction, there is a factorization of the map $Q_T$. 
Let $\xymatrix{\mathcal{C} \ar[r]_F \ar@(ul,dl)_{T} &\mathcal{D} \ar@/_/[l]_G \ar@(ur,dr)^{K}}$ be an adjunction of $\infty$-categories inducing a monad $T = G\circ F$ on $\mathcal{C}$ and comonad on $K = F\circ G$ on $\mathcal{D}$. The descent category for $T$ on $\mathcal{C}$ can be defined as before. Since the adjunctions $F_T:\mathcal{C} \leftrightarrows \operatorname{LMod}_T(\mathcal{C}):U_T$ and $F:\mathcal{C} \rightleftarrows \mathcal{D}: G$ both realize the monad $T$ on $\mathcal{C}$ there is an induced map $\operatorname{Can}_T:\mathcal{D} \to \operatorname{LMod}_T(\mathcal{C})$ such that $U_T\circ \operatorname{Can}_T = G$ and $\operatorname{Can}_T\circ F = F_T$. 

\begin{equation}
\xymatrix{
\mathcal{C} \ar[r]_F \ar@(ul,dl)_{T} \ar[d]_{F_T} \ar@/^5pc/[drr]^{\operatorname{Can}_K}  \ar[ddr]^{Q_T} &\mathcal{D} \ar@/_1pc/[dr]_{F_K} \ar[dl]^{\operatorname{Can}_T} \ar@/_1pc/[l]_G \ar@(ur,dr)^{K}\\
\operatorname{LMod}_T(\mathcal{C}) \ar@/_1pc/[dr] \ar@/_/[u]_{U_T} \ar@(dl,dr)_{K_T} &&\operatorname{LComod}_K(\mathcal{D}) \ar[ul]_{U_K} \ar@{-->}[dl]^{\Phi}\\
&\operatorname{Desc}_{\mathcal{C}}(T) \ar[ul]  \\
}
\end{equation}

Therefore the map $\operatorname{Can}_K:\mathcal{D} \to \operatorname{LMod}_T(\mathcal{C})$ commutes with the comonad actions of $K$ and $K_T$, i.e. $\operatorname{Can}_T\circ K \simeq K_T\circ \operatorname{Can}_T$. Hence if $X \in \mathcal{D}$ is a comodule over $K$, the comodule structure being encoded as a cosimplcial object 
$$\xymatrix{
X \ar[r] \ar@<1ex>[r] &K(X) \ar[r] \ar@<1ex>[r] \ar@<-1ex>[r] &K^2(X) \ldots \\
}$$
after applying $\operatorname{Can}_T$, we get a cosimplicial object in $\operatorname{LMod}_T(\mathcal{C})$
$$\xymatrix{
\operatorname{Can}_T(X) \ar[r] \ar@<1ex>[r] &\operatorname{Can}_T(K(X)) \ar[r] \ar@<1ex>[r] \ar@<-1ex>[r] &\operatorname{Can}_T(K^2(X)) \ldots \\
}$$ which is equuivalent as a cosimplicial object in $\operatorname{LMod}_T(\mathcal{C})$ to 
$$\xymatrix{
\operatorname{Can}_T(X) \ar[r] \ar@<1ex>[r] &K_T(\operatorname{Can}_T(X)) \ar[r] \ar@<1ex>[r] \ar@<-1ex>[r] &K_T^2(\operatorname{Can}_T(X)) \ldots \\
}$$ This puts a $K_T$-comodule structure on $\operatorname{Can}_T(X)$. We have arrived at the following proposition.

\begin{prop}\label{comoduletodescentmap}
There is an induced functor $$\Phi:\operatorname{LComod}_K(\mathcal{D}) \to \operatorname{Desc}_{\mathcal{C}}(T).$$ This map is an equivalence if and only if $\operatorname{Can}_T$ is an equivalence, which is the case when $G$ is monadic.

\end{prop}

There is a commutative diagram 
$$\xymatrix{
\mathcal{C} \ar[rr]^{Q_T} \ar[dr]_{\operatorname{Can}_K} &&\operatorname{Desc}_T(\mathcal{C})\\
&\operatorname{LComod}_K(\mathcal{D}) \ar[ur]_{\Phi}\\
}$$

If follows that $T$ is of effective descent in $\mathcal{C}$ if and only of $G$ is monadic and $F$ is comonadic.

\subsection{The Beck Chevalley condition}Given a symmetric monoidal stable $\infty$-category $\mathcal{C}^{\otimes}$ and a fiber functor $\omega:\mathcal{C}^{\otimes} \to \operatorname{Mod}_A^{\otimes}$, let $T$ and $K$ be the associated monad and comonad
$$\xymatrix{
\mathcal{C}^{\otimes}\ar[r]_{\omega} \ar@(ul,dl)_T &\operatorname{Mod}_A \ar@/_1pc/[l]_{\omega'} \ar@(ur,dr)^K\\
}$$ We show that under the assumption that $\omega$ is compact, the forgetful functor $$\operatorname{Rep}_A(\operatorname{Aut}^{\otimes}_{\omega}) \to \operatorname{Mod}_A$$ has a right adjoint and the induced comonad on $\operatorname{Mod}_A$ is equivalent to $K$. Therefore there is a map of $\infty$-categories $$\operatorname{Rep}_A(\operatorname{Aut}^{\otimes}_{\omega}) \to \operatorname{LComod}_K(\operatorname{Mod}_A)$$ which is also an equivalence.

\begin{definition}(Right-adjointability)\cite[4.7.5.13]{HA}\label{rightadjointable}
Given a diagram of $\infty$-categories $\sigma$:

$$\xymatrix{
\mathcal{C} \ar[r]^G  \ar[d]_U &\mathcal{D} \ar@/_1pc/[l]_{H} \ar[d]^V\\
\mathcal{C}' \ar[r] ^{G'}  &\mathcal{D}'  \ar@/_1pc/[l]_{H'} \\
}$$
and a specified equivalence $\alpha: G'\circ U \simeq V\circ G$. We say $\sigma$ is {\em right adjointable} if $G$ and $G'$ admit right adjoints $H$ and $H'$, and the composition transformation $$\xymatrix{ U\circ H \to H\circ G' \circ U \circ H \ar[r]^{\alpha} &H'\circ V \circ G \circ H \to H'\circ V}$$ is an equivalence.
\end{definition}

\begin{prop}\label{bc}
Let $\omega:\mathcal{C}^{\otimes} \to \operatorname{Mod}_A^{\otimes}$ be a compact fiber functor. The commutative square 
$$\xymatrix{
\mathcal{C} \ar[r]^{\omega} \ar[d]_{\omega} &\operatorname{Mod}_A \ar@/_1pc/[l]_{\omega '}\ar[d]^{G}\\
\operatorname{Mod}_A \ar[r]^G &\operatorname{Mod}_A\otimes_{\mathcal{C}}\operatorname{Mod}_A \ar@/_1pc/[l]_{G'}\\
}$$
is right-adjointable. Here $\omega'$ and $G'$ are right adjoint to $\omega$ and $G$ respectively.
\end{prop}
\begin{proof}
By Thm.\ref{affine} there is a commutative Hopf algebra $\mathcal{H}$ over $A$ and symmetric monoidal equivalences 
\begin{enumerate}
\item $\operatorname{Mod}_A\otimes_{\mathcal{C}}\operatorname{Mod}_A \simeq \operatorname{Mod}_{\mathcal{H}}^{\otimes}$
\item $\operatorname{Mod}_A\otimes_{\mathcal{C}}\operatorname{Mod}_B \simeq \left(\operatorname{Mod}_{\mathcal{H}\otimes_AB}\right)^{\otimes}$
\end{enumerate}

Under these equivalences the pushout square of the proposition can be identified with 
$$\xymatrix{
\mathcal{C} \ar[r]^{\omega} \ar[d]_{\omega} &\operatorname{Mod}_A \ar@/_1pc/[l]_{\omega '}\ar[d]^{G}\\
\operatorname{Mod}_A \ar[r]^G &\operatorname{Mod}_{\mathcal{H}} \ar@/_1pc/[l]_{G'}\\
}$$ where $G$ is base change along the unit map $A\to \mathcal{H}$ of the Hopf algebra $\mathcal{H}$, and $G'$ the forgetful functor.

First we define a map $\omega_+:\operatorname{Mod}_A^{\otimes} \to \mathcal{C}^{\otimes}$ by requiring it to satisfy the right adjointability condition. Then we show that $\omega_+$ is the right adjoint to $\omega$. Since $\mathcal{C} \simeq \varprojlim_{\mathcal{C}/}\operatorname{Mod}_B$, an $\infty$-functor $\operatorname{Mod}_A \to \mathcal{C}$ is equivalent to a family of $\infty$-functors $\operatorname{Mod}_A \to \operatorname{Mod}_B$ for every $f:\mathcal{C} \to \operatorname{Mod}_B$ which are compatible upto higher coherent homotopies. 
$$\xymatrix{
\mathcal{C} \ar[r]^{\omega} \ar[d]_{f} &\operatorname{Mod}_A \ar@/_1pc/@{.>}[l]_{\omega_+}\ar[d]^{F} \ar[dl]_{\omega_+(f)}\\
\operatorname{Mod}_B \ar[r]^H   &\operatorname{Mod}_{\mathcal{H}\otimes_AB} \ar@/^1pc/[l]^{H'}\\
}$$
We define $\omega_+:\operatorname{Mod}_A \to \mathcal{C}$ by defining $\omega_+(f) = H'\circ F$ for every $f:\mathcal{C} \to \operatorname{Mod}_B$. We have to check this is a well-defined map. That is, for any map of $\mathbb{E}_{\infty}$-rings $B\to C$, we show there is a canonical equivalence between $g^*\circ\omega_+(f)$ and $\omega_+(g^*\circ f)$.

$$\xymatrix{
\mathcal{C} \ar[r]^{\omega} \ar[d]_{f} &\operatorname{Mod}_A \ar@/_1pc/@{.>}[l]_{\omega_+}\ar[d]^{F} \ar[dl]_{\omega_+(f)} \ar@/_7pc/[ddl]^{\omega_+(g^*\circ f)}\\
\operatorname{Mod}_B \ar[r]^H  \ar[d]_{g^*} &\operatorname{Mod}_{\mathcal{H}\otimes_AB} \ar@/^1pc/[l]^{H'} \ar[d]^{\mathcal{H}\otimes_Ag^*} \\
\operatorname{Mod}_C \ar[r]^K &\operatorname{Mod}_{\mathcal{H}\otimes_AC} \ar@/^1pc/[l]^{K'}\\
}$$

By our definition, $$g^*\circ\omega_+(f) = g^*\circ H' \circ F$$ $$\omega_+(g^*\circ f) = K'\circ (\mathcal{H}\otimes_Ag^*)\circ F.$$ Therefore $g^*\circ\omega_+(f)$ and $\omega_+(g^*\circ f)$ are canonically equivalent if there is  canonical equivalence $$g^*\circ H'\simeq K'\circ (\mathcal{H}\otimes_Ag^*).$$ This follows from the right adjointability of the lower square which is $\operatorname{Mod}_{(-)}$ applied to the pushout square of $\mathbb{E}_{\infty}$-rings 
$$\xymatrix{ 
B \ar[r] \ar[d] &\mathcal{H}\otimes_AB \ar[d] \\ 
C \ar[r] &\mathcal{H}\otimes_AC\\
}$$

Now we show $\omega_+ \simeq \omega'$. Any $X\in \mathcal{C}$ is equivalent to $\lim_{\mathcal{C}/\mathsf{Aff}}f'fM$, where the limit is taken over diagrams $\xymatrix{\mathcal{C} \ar[r]_f &\operatorname{Mod}_B \ar@/_1pc/[l]_{f'}}$ in $Pr^{L,\sigma}$ for $\operatorname{Mod}_B$ is in $\mathsf{Aff}$, the image under $QC$ of $\operatorname{CAlg}_A$. 
 
\begin{equation}
\begin{split}
\omega_+(M) &= \lim_{\mathcal{C}/\mathsf{Aff}} f'f(\omega_+M)\\
&\simeq\lim_{\mathcal{C}/\mathsf{Aff}}(f'H'FM)\\
&\simeq\lim_{\mathcal{C}/\mathsf{Aff}}(\omega'F'FM)\\
&\simeq \omega'\lim_{A/}(F'FM)\\
&\simeq \omega'M\\
\end{split}
\end{equation}

\end{proof}

\subsubsection{} The left adjoint fiber functor $\omega: \mathcal{C} \to \operatorname{Mod}_A$ induces a comonad $K= \omega'\circ \omega$ on $\operatorname{Mod}_A$.
$$\xymatrix{
\mathcal{C} \ar[r]_{\omega} &\operatorname{Mod}_A \ar@/_1pc/[l]_{\omega'} \ar@(ur,dr)^K\\
}$$

\begin{prop}\label{equivcomonads}
Let $\omega$ be compact. The projection map $U:\varprojlim(\operatorname{Cobar}(\omega)) \to \operatorname{Mod}_A$ is has a right adjoint $U^!$ which produces a $\infty$-comonad $K'=U\circ U^!$ on $\operatorname{Mod}_A$. 

$$\xymatrix{
\varprojlim\operatorname{Cobar}(\omega) \ar[r]_{U} &\operatorname{Mod}_A \ar@/_1pc/[l]_{U^!} \ar@(ur,dr)^{K'}\\
}$$

The map $U$ is comonadic and the $\infty$-comonads $K$ and $K'$ are equivalent, $K\simeq K' \in \operatorname{Fun}(\operatorname{Mod}_A,\operatorname{Mod}_A)$.
\end{prop}

\begin{proof} The proof depends on the following result of Lurie (\cite[Thm.4.7.6.2]{HA}). 

Let $\mathcal{D}^{\bullet}$ be a cosimplicial $\infty$-category. If for every $[m] \to [n]$ in $\Delta$, the diagram 
$$\xymatrix{
\mathcal{D}^m \ar[d] \ar[r]^{d_0} &\mathcal{D}^{m+1} \ar[d]\\
\mathcal{D}^n \ar[r]^{d_0} &\mathcal{D}^{n+1} \\
}$$ is right adjointable, then 
\begin{enumerate}
\item The forgetful functor $U:\varprojlim \mathcal{D}^{\bullet} \to \mathcal{D}^0$ admits a right adjoint
\item The square $$\xymatrix{\varprojlim(\mathcal{D}^{\bullet}) \ar[d]_U \ar[r]^U &\mathcal{D}^0 \ar@/_1pc/[l]_H \ar[d]^{d^1}\\ \mathcal{D}^0 \ar[r]^{d^0} &\mathcal{D}^1 \ar@/_1pc/[l]_{H(0)}}$$ is right adjointable and there is an equivalence $U\circ H \simeq H(0) \circ d^1 \in \operatorname{Fun}(\mathcal{D}^0,\mathcal{D}^0)$
\item $U$ is comonadic
\end{enumerate}

Take $\mathcal{D}^{\bullet} = \operatorname{Cobar}(\omega)$. Assuming $\omega$ is compact, the cobar construction for $\omega$ is a cosimplicial symmetric monoidal stable $\infty$-category of the form

$$\xymatrix{
\operatorname{Mod}_A \ar[r] \ar@<1ex>[r] &\operatorname{Mod}_{\mathcal{H}} \ar[r] \ar@<1ex>[r] \ar@<-1ex>[r] &\operatorname{Mod}_{\mathcal{H}\otimes_A \mathcal{H}} \cdots\\
}$$
where $\mathcal{H}$ is the $A$-Hopf algebra representing $\operatorname{Aut}^{\otimes}_{\omega}$.

For every $[m] \to [n] \in \Delta$ the diagram in the theorem takes the form
$$\xymatrix{
\operatorname{Mod}_{\mathcal{H}^{\otimes_A^m}} \ar[r]^{d^0} \ar[d] &\operatorname{Mod}_{\mathcal{H}^{\otimes_A^{m+1}}} \ar[d] \\
\operatorname{Mod}_{\mathcal{H}^{\otimes_A^n}} \ar[r]^{d^0}  &\operatorname{Mod}_{\mathcal{H}^{\otimes_A^{n+1}}} \\
}$$ where every map is a base chage map. Therefore this is right ajointable. Therefore $U:\varprojlim \operatorname{Cobar}(\omega) \to \operatorname{Mod}_A$ admits a right adjoint $U^!$ and $U$ is comonadic with respect to the comonad $K' = U^!\circ U$. Furthermore, the diagram 
$$\xymatrix{
\varprojlim\operatorname{Cobar}(\omega) \ar[d]_U \ar[r]^U &\operatorname{Mod}_A \ar[d]^{G=-\otimes_A\mathcal{H}} \ar@/_1pc/[l]_{U^!}\\
\operatorname{Mod}_A \ar[r]_{G} &\operatorname{Mod}_{\mathcal{H}} \ar@/_1pc/[l]_{G'}\\
}$$ is right adjointable. 

Comparing with Prop.\ref{bc} there is an equivalence of $\infty$-comonads $$K \simeq K' \simeq G'\circ G \in \operatorname{Fun}(\operatorname{Mod}_A,\operatorname{Mod}_A).$$
\end{proof}

\begin{theorem}
Let $\omega$ be a compact fiber functor $\mathcal{C}^{\otimes} \to \operatorname{Mod}_A^{\otimes}$. Let $K$ be the $\infty$-comonad on $\operatorname{Mod}_A$ defined by $\omega$ and its right adjoint. Then there is an equivalence of $\infty$-categories $$\varprojlim \operatorname{Cobar}(\omega) \simeq \operatorname{LComod}_{K}(\operatorname{Mod}_A).$$
\end{theorem}

\begin{proof}
From Prop.\ref{equivcomonads} since $U$ is comonadic, the canonical map $\varprojlim\operatorname{Cobar}(\omega) \to \operatorname{LComod}_{K'}(\operatorname{Mod}_A)$ is an equivalence and since $K\simeq K'$ there is an equivalence $$\operatorname{LComod}_{K}(\operatorname{Mod}_A) \simeq \operatorname{LComod}_{K'}(\operatorname{Mod}_A).$$

$$\xymatrix{
\mathcal{C} \ar[r]_{\omega} \ar@{..>}[dr] &\operatorname{Mod}_A \ar@/_1pc/[l]_{\omega'} \ar@/_1pc/[d] \ar@(ur,ul)_{K} &\operatorname{Mod}_A  \ar@(ur,ul)_{K'} \ar@/_1pc/[d] \ar@/^1pc/[r]^{U^!} &\varprojlim(\operatorname{Cobar}(\omega)) \ar[l]^{U} \ar@{.>}[dl]^{\simeq}\\
&\operatorname{LComod}_K(\operatorname{Mod}_A)\ar[u] \ar[r]^{\simeq} &\operatorname{LComod}_{K'}(\operatorname{Mod}_A) \ar[u]\\
}$$

\end{proof}

\subsection{The comparison map}(Proof of main theorem Thm.\ref{theorem}) By \ref{repcobar}, Prop.\ref{equivcomonads} and Prop.\ref{comoduletodescentmap} there is an $\infty$-functor $\operatorname{Rep}_A(\operatorname{Aut}^{\otimes}_{\omega}) \to \operatorname{Desc}_T(\mathcal{C})$ by composition:

$$\xymatrix{\operatorname{Rep}_A(\operatorname{Aut}^{\otimes}_{\omega}) &\simeq &\varprojlim \operatorname{Cobar}(\omega) \ar[d]^{\simeq} \\
&&\operatorname{LComod}_K(\operatorname{Mod}_A) \ar[r]^{\Phi} &\operatorname{Desc}_{\mathcal{C}}(T)\\
}$$

\appendix

\section{Symmetric monoidal stable $\infty$-categories}

In this section we review sections of the theory of $\infty$-categories and algebras in the context of $\infty$-categories. We also define the notations used throughout the paper. We refer the reader to \cite{HTT} and \cite{HA} for reference.

\subsection{$\infty$-categories}

The notion of $\infty$-categories roughly captures the notion of topological categories where the composition and associativity properties are defined upto coherent homotopies. There are many models for this categorical structure. The notion of quasicategories developed by Joyal and Jardine and extensively used by Lurie (\cite{HTT},\cite{HA}) shall the basis for our work. 

\begin{definition}
A simplicial set $K$ is an {\em $\infty$-category} if it satisfies the following condition: for any $0<i<n$, any map $f_0:\Lambda^n_i \to K$ admits (possibly non-unique) extension $f:\Delta^n \to K$. Here $\Lambda_i^n \subseteq \Delta$ denotes the $i$-th horn, obtained from the simplex $\Delta^n$ by deleting the face opposite the $i$-th vertex.
\end{definition}

Let $K$ be a simplicial set underlying an $\infty$-category $\mathcal{C}$. The objects of $\mathcal{C}$ are the elements of $K_0$, the morphisms of $\mathcal{C}$ are the elements of $K_1$. The hom set $\operatorname{Maps}_{\mathcal{C}}(x,y)$ is a Kan complex. So every $\infty$-category has an underlying simplicial category.

A {\em functor} between $\infty$-categories is a map of simplicial sets. The functors betwen $\infty$-categories $\mathcal{C}$ and $\mathcal{D}$ assemble in an $\infty$-category $\operatorname{Fun}(\mathcal{C},\mathcal{D})$. We say a functor is an {\em equivalence of $\infty$-categories} when the map of the underlying simplicial categories is a Dwyer-Kan equivalence. The {\em homotopy category} of $\mathcal{C}$ is the homotopy category of the underlying simplicial category.

The notion of $\infty$-categories have the coherent homotopies built into the definition. Thus all functors are naturally derived and the notion of limits and colimits in the $\infty$-categorical context correspond to homotopy limits and colimits in older formulations.

\begin{definition}
Let $\operatorname{Cat}_{\infty}^{\Delta}$ be the simplicial category whose objects are small $\infty$-categories. Given two $\infty$-categories $\mathcal{C}$ and $\mathcal{D}$ define the mapping space $\operatorname{Maps}_{\operatorname{Cat}_{\infty}^{\Delta}}(\mathcal{C},\mathcal{D})$ to be the maximal Kan complex contained in the $\infty$-category of functors $\operatorname{Fun}(\mathcal{C},\mathcal{D})$. The $\infty$-category $\operatorname{Cat}_{\infty}$ is defined to be the simplicial nerve $N(\operatorname{Cat}_{\infty}^{\Delta})$.
\end{definition} 

The $\infty$-category $\operatorname{Cat}_{\infty}$ admits small limits. 

\subsection{Co-cartesian fibration of $\infty$-categories}
The notion of a co-cartesian fibration of $\infty$-categories $p:\mathcal{C} \to \mathcal{D}$ captures the idea of an $\infty$-category $\mathcal{C}$ fibered in $\infty$-categories over an $\infty$-category $\mathcal{D}$. Roughly this means that for every object $d\in \mathcal{D}$ the pre-image $p^{-1}(d) = \mathcal{C}_d \subseteq \mathcal{C}$ is an $\infty$-category, and for every edge $f:d_1 \to d_2$ in $\mathcal{D}$ there is a canonical functor $\mathcal{C}_{d_1} \to \mathcal{C}_{d_2}$ projecting to $f$ via $p$, upto higher homotopy coherences, resulting in a functor $F:\mathcal{D} \to \operatorname{Cat}_{\infty}$. The co-cartesian fibration $p:\mathcal{C} \to \mathcal{D}$ is said to be {\em classified} by the functor $F$. We give precise definitions here. 

\begin{definition}
Let $p:\mathcal{C} \to \mathcal{D}$ be a functor between $\infty$-categories. 
\begin{enumerate}
\item Given an edge $f:x\to y$ in $\mathcal{D}$, we say $f$ is {\em $p$-cartesian} if the canonical map $$\mathcal{C}_{/x} \to \mathcal{D}_{/p(x)} \times_{\mathcal{D}_{/p(y)}} \mathcal{C}_{/y}$$ is an equivalence of $\infty$-categories. The edge $f:x\to y$ is said to the {\em co-cartesian lift of $p(f)$ relative to $x$}.
\item The functor $p:\mathcal{C} \to \mathcal{D}$ is a {\em co-cartesian fibration} if every object in $$\operatorname{Fun}([1],\mathcal{D})\times_{s,\mathcal{D},p}\mathcal{C}$$ has a co-cartesian lift.
\end{enumerate}

There is a dual notion of cartesian fibrations. A functor $p:\mathcal{C} \to \mathcal{D}$ is {\em cartesian} if $p^{op}:\mathcal{C}^{op} \to \mathcal{D}^{op}$ is co-cartesian.
\end{definition}

The {\em $\infty$-category of co-cartesian fibrations over an $\infty$-category $\mathcal{D}$} is the subcategory of $(\operatorname{Cat}_{\infty})_{/\mathcal{D}}$ spanned by co-cartesian fibrations over $\mathcal{D}$. We denote this $\infty$-category by $\operatorname{co}\mathcal{CF}\operatorname{ib}(\mathcal{D})$.

There is a (contravariant) Grothendieck construction $$
\xymatrix{
\operatorname{Fun}(\mathcal{D},\operatorname{Cat}_{\infty}) \ar[r]_{\simeq}^{Gro} &\operatorname{co}\mathcal{CF}\operatorname{ib}(\mathcal{D})
}$$ which is an equivalence of $\infty$-categories. Given a functor $F:\mathcal{D} \to \operatorname{Cat}_{\infty}$ we list the notable features of the resulting co-cartesian fibration $p:\mathcal{C} \to \mathcal{D}$.

\begin{enumerate}
\item For $d\in \mathcal{D}$ the pre-image $p^{-1}(d) \subseteq \mathcal{C}$ is canonically equivalent to $F(b) \in \operatorname{Cat}_{\infty}$.
\item Given any edge $\phi:d_1\to d_2$ in $\mathcal{D}$ and an object $x \in p^{-1}(d_1)$ in the fiber over the source, there is a canonical morphism $x \to \phi_*x$ in $\mathcal{C}$ which projects to $\phi$ in $\mathcal{D}$, called the {\em $p$-cocartesian lift} of $\phi$ relative to $x$. Further, the identification $p^{-1}(d_2) \simeq F(d_2)$ can be used to identify the object $\phi_*(x) \in p^{-1}(d_2)$ with the object $(F\phi)(x) \in F(d_2)$.


\end{enumerate}

Dually there is an $\infty$-category $\mathcal{CF}\operatorname{ib}(\mathcal{D})$ of cartesian fibrations over $\mathcal{D}$, and a (covariant)  Grothendieck construction$$
\xymatrix{
\operatorname{Fun}(\mathcal{D}^{op},\operatorname{Cat}_{\infty}) \ar[r]_{\simeq}^{Gro} &\mathcal{CF}\operatorname{ib}(\mathcal{D})
}$$ which is an equivalence of $\infty$-categories.

\begin{remark}
A co-cartesian fibration $p:\mathcal{C} \to \mathcal{D}$ whose fibers $p^{-1}(d)$ are Kan complexes is called a {\em left fibration}. These form a full $\infty$-subcategory $\mathcal{LF}\operatorname{ib}(\mathcal{D}) \subseteq \operatorname{co}\mathcal{CF}\operatorname{ib}(\mathcal{D})$. The Grothendieck functor restricts to give an equivalence 
$$\xymatrix{
\operatorname{Fun}(\mathcal{D},\operatorname{Cat}_{\infty}) \ar[r]_{\simeq}^{Gro} &\operatorname{co}\mathcal{CF}\operatorname{ib}(\mathcal{D})\\
\operatorname{Fun}(\mathcal{D},\mathcal{S}) \ar[r]^{Gro}_{\simeq} \ar@{^{(}->}[u] &\mathcal{LF}\operatorname{ib}(\mathcal{D}) \ar@{^{(}->}[u]
}$$ from the $\infty$-category of functors between $\mathcal{D}$ and the $\infty$-category of spaces to the $\infty$-category of left fibrations over $\mathcal{D}$.

Dually, there is an the $\infty$-category of right fibrations $\mathcal{RF}\operatorname{ib}(\mathcal{D})$ which is equivalent via the  Grothendieck construction to the $\infty$-category $\operatorname{Fun}(\mathcal{D}^{op},\mathcal{S})$.
\end{remark}

\subsection{Monoids, groups and algebras}
A monoid object in an $\infty$-category corresponds to an $A_{\infty}$ object in the operadic context. The following defintion comes from the idea of modelling an $A_{\infty}$-monoid as a certain simplicial object. This goes back to Segal's notion of $\Gamma$-spaces.

\begin{definition}({\em Monoid objects}) Let $\mathcal{C}$ be an $\infty$-category which admits finite limits. A monoid object in $\mathcal{C}$ is a simplicial object $f: N(\Delta)^{op} \to \mathcal{C}$ having the property that $f([0])$ is a final object, and for each $n \in \mathbb{N}$, the inclusions $\{i-1,i\} \to [n]$ for $1\leq i \leq n$ induce the equivalence $$f([n]) \to f([1]) \times \ldots \times f([1])$$ where the right hand side in the $n$-fold product. Here $f([1])$ is thought of as underlying object of the monoid.

Denote by $\operatorname{Mon}(\mathcal{C})$ the full subcategory of $\operatorname{Fun}(N(\Delta)^{op},\mathcal{C})$ spanned by monoid objects. 
\end{definition}

\begin{definition}({\em Group objects}) Let $\mathcal{C}$ be an $\infty$-category which admits finite limits. A group object in $\mathcal{C}$ is a monoid object satisfying the following property: for every $n$ and every partition $[n] = A\cup B$ for which $A\cap B = \{s\}$, the square
$$\xymatrix{
f([n]) \ar[r] \ar[d] &f(A) \ar[d]\\
f(B) \ar[r] &f(\{s\})\\
}$$ is a pullback square in $\mathcal{C}$. Denote by $\operatorname{Grp}(\mathcal{C})$ the full subcategory of $\operatorname{Mon}(\mathcal{C})$ spanned by the group objects in $\mathcal{C}$.
\end{definition}

A commutative monoid in an $\infty$-category corresponds to a $E_{\infty}$-monoid in the traditional setting. The following definition is modelled on Segal's machine for infinite loop spaces. The idea is to replace $\Delta^{op}$ in the definition of a monoid with the category of pointed finite sets $\Gamma$.

\begin{definition}({\em Commutative monoids})\label{monoid}
For every $n\geq 0$ let $\langle n \rangle ^0 = \{1,\ldots,n\}$ and $\langle n \rangle = \langle n \rangle ^0 _* = \{*, 1,2, \ldots , n\}$ the pointed set obtained by adjoining the basepoint $*$ to $\langle n \rangle$. The category $\Gamma$ has as objects $\langle n \rangle$ and given  a morphism from $\langle m \rangle$ to $\langle n \rangle$ in $\Gamma$ is a map $\alpha: \langle m \rangle \to \langle n \rangle$  so that $\alpha(*) = *$.

Let $\mathcal{C}$ be an $\infty$-category closed under finite limits. A commutative monoid in $\mathcal{C}$ is a functor $f:N(\Gamma) \to \mathcal{C}$ so that $f\langle 0 \rangle$ is a final object and $f(\langle n \rangle) \simeq f(\langle 1 \rangle) \times \ldots \times f(\langle 1 \rangle)$.

\end{definition}

\begin{example} Let $f:X\to Y$ be a $1$-morphism in a $\infty$-category $\mathcal{C}$ which admits finite limits. Let $\Delta_{+}$ be the category of finite (including empty) linearly ordered sets. Denote by $[-1]$ the empty set. There is a fully faithful map $\Delta^1 \to \Delta_{+}$ taking $\{0\}$ to $[-1]$ and $\{1\}$ to $[0]$. Let $f:\Delta^1 \to \mathcal{C}$ be the map corresponding to $f:X \to Y$. Then the left Kan extension to $C(f):N(\Delta_{+}) \to \mathcal{C}$ exists and is called the \v{C}ech Nerve of $f$.

The {\em \v{C}ech Nerve} of $f$ is a group object in $\mathcal{C}_{/X}$.
\end{example}

\subsection{Monoidal $\infty$-categories and symmetric monoidal $\infty$-categories}

\begin{definition}
The $\infty$-category $\operatorname{Cat}_{\infty}$ is closed under finite limits. A {\em monoidal $\infty$-category} is a monoid object in $\operatorname{Cat}_{\infty}$. 
\end{definition}

\begin{remark}
Unwinding the defintion we see that a monoidal $\infty$-category is a simplicial $\infty$-category $$F:N(\Delta)^{op} \to \operatorname{Cat}_{\infty}$$  satisfying the conditions in Def.\ref{monoid}. This data is equivalent to a coCartesian fibration $p:\mathcal{C}^{\otimes} \to N(\Delta)^{op}$ of simplcial sets where $\mathcal{C}^{\otimes}_{[n]}$ is equivalent to $\mathcal{C}^{\otimes}_{[1]} \times \ldots \times \mathcal{C}^{\otimes}_{[1]}$ and $\mathcal{C}^{\otimes}_{[0]}$ is a final object. 

The $\infty$-category $\mathcal{C} = \mathcal{C}^{\otimes}_{[1]}$ is the underlying $\infty$-category of $\mathcal{C}^{\otimes}$. We say that $\mathcal{C}^{\otimes}$ is the {\em monoidal structure on $\mathcal{C}$}. Roughly a monoidal category $\mathcal{C}$ comes with a unit object $\Delta^0 \to \mathcal{C}$ and a product map $\otimes: \mathcal{C}\times \mathcal{C} \to \mathcal{C}$ which is associative upto coherent homotopies. 
\end{remark}
 
\begin{definition}({\em Algebras and Modules})

\begin{enumerate}
\item  An (associative) algebra object in a monoidal category $\mathcal{C}$ is a simplicial object $A: N(\Delta)^{op} \to \mathcal{C}$ so that $A([0])$ is a final object and $A([n]) \simeq A([1]) \otimes \ldots \otimes A([1])$. The $\infty$-category of {\em (associative) algebra objects in $\mathcal{C}$}, denoted by $\operatorname{Alg}(\mathcal{C})$ is the full subcategory of $\operatorname{Fun}(N(\Delta)^{op}, \mathcal{C})$ spanned by algebra objects in $\mathcal{C}$.

\item
An $\infty$-category $M$ is left tensored over a monoidal category $\mathcal{C}$ if there is an action $\mathcal{C} \times \mathcal{M} \to \mathcal{M}$ which is defined upto coherent homotopies. This is encoded as a simplicial object $f: N(\Delta)^{op} \to \operatorname{Cat}_{\infty}$ so that $f([n]) \simeq \mathcal{C}^{n} \times \mathcal{M}$ and $f([0]) \simeq \mathcal{M}$. 

\item
Given a $\infty$-category $\mathcal{M}$ which is left tensored over a monoidal $\infty$-category $\mathcal{C}$, there is an $\infty$-category of {\em left module objects in $\mathcal{M}$} denoted by $\operatorname{LMod}(\mathcal{M})$, It is the fullsubcategory of $\operatorname{Fun}(N(\Delta)^{op},\mathcal{M})$ spanned by simplicial objects $M:N(\Delta)^{op} \to \mathcal{M}$ where $M([n]) \simeq A^{\otimes_{n}}\otimes M$ and $M([0])\simeq M$. Here $A \in \operatorname{Alg}(\mathcal{C})$ and $M \in \mathcal{M}$. There is a map of $\infty$-categories $$\operatorname{LMod}(\mathcal{M}) \to \operatorname{Alg}(\mathcal{C}).$$ If $A \in \operatorname{Alg}(\mathcal{C})$ we let $\operatorname{LMod}_A(\mathcal{M})$ denote the the fiber $\operatorname{LMod}(\mathcal{M}) \times_{\operatorname{Alg}(\mathcal{C})}\{A\}$. We refer to $\operatorname{LMod}_A(\mathcal{M})$ as the $\infty$-category of {\em left $A$-modules in $\mathcal{M}$}.

\item Given an $\infty$-category $M$ which is right tensored over a monoidal category $\mathcal{C}$ and $A\in \operatorname{CAlg}(\mathcal{C})$, the $\infty$-categories $\operatorname{RMod}_{\mathcal{C}}(\mathcal{M})$ and $\operatorname{RMod}_{A}(\mathcal{M})$ are similarly defined. 

\end{enumerate}
\end{definition}

\begin{definition}({\em Coalgebras and comodules})
\begin{enumerate}
\item Let $\mathcal{C}$ be a monoidal $\infty$-category. Define $\operatorname{CoAlg}(\mathcal{C})$ to be $\operatorname{Alg}(\mathcal{C}^{op})^{op}$. We refer to this as the {\em $\infty$-category  of (coassociative) coalgebra objects in $\mathcal{C}$}.

\item Let $\mathcal{M}$ be an $\infty$-category left tensored over a monoidal $\infty$-category $\mathcal{C}$. Define $\operatorname{LComod}(\mathcal{M})$ to be $\operatorname{LMod}(\mathcal{M}^{op})^{op}$. We refer to this as the {\em $\infty$-category of (left) comodule objects of $\mathcal{M}$}. There is a map of $\infty$-categories $$\operatorname{LComod}(\mathcal{M}) \to \operatorname{CoAlg}(\mathcal{C}).$$ If $H \in \operatorname{CoAlg}(\mathcal{C})$, then we let $\operatorname{LComod}_H(\mathcal{M})$ denote the fiber $\operatorname{LComod}(\mathcal{M}) \times_{\operatorname{CoAlg}(\mathcal{C})}\{H\}$. We refer to $\operatorname{LComod}_H(\mathcal{M})$ as the {\em $\infty$-category of left $H$-comodules in $\mathcal{M}$}.
Alternately, $\operatorname{LComod}_H(\mathcal{M}) \simeq \operatorname{LMod}_H(\mathcal{M}^{op})^{op}$.

\end{enumerate}
\end{definition}

\begin{definition}
Let $\mathcal{C}$ be a monoidal $\infty$-category. Let $\mathcal{M}$ be right tensored over $\mathcal{C}$ and let $\mathcal{N}$ be left tensored over $\mathcal{C}$. Let $F: \mathcal{M} \times \mathcal{N} \to \mathcal{D}$ be a balanced pairing (\cite[]{DAG2}). Then there is a {\em two-sided bar construction} $$\operatorname{Bar}_{\bullet}:\operatorname{RMod}(\mathcal{M}) \times_{\operatorname{Alg}(\mathcal{C})}\operatorname{LMod}(\mathcal{N}) \subset \operatorname{Fun}(N(\Delta)^{op}, \mathcal{M} \times \mathcal{N}) \to \operatorname{Fun}(N(\Delta)^{op},\mathcal{D}).$$

If $\mathcal{D}$ admits geometric realizations of simplicial objets, the {\em relative tensor product} can be defined as the composition $$|-|\circ\operatorname{Bar}_{\bullet}: \operatorname{RMod}(\mathcal{M}) \times_{\operatorname{Alg}(\mathcal{C})}\operatorname{LMod}(\mathcal{N}) \to \mathcal{D}.$$
\end{definition}

\begin{remark}
Objects in $\operatorname{RMod}(\mathcal{M}) \times_{\operatorname{Alg}(\mathcal{C})}\operatorname{LMod}(\mathcal{N})$ can be identified with triples $(M,A,N)$ where $A$ is an algebra object in $\mathcal{C}$, $M$ is a right $A$-module and $N$ is a left $A$-module. Then the image of $(M,A,N)$ under the relative tensor product is denoted by $M\otimes_AN$.

Given a monoidal $\infty$-category $\mathcal{C}$ and $A \in \operatorname{Alg}(\mathcal{C})$, the relatve tensor product gives a pairing $$\operatorname{RMod}_A(\mathcal{C}) \times \operatorname{LMod}_A(\mathcal{C}) \to \mathcal{C}.$$
\end{remark}

\begin{definition}
A {\em symmetric monoidal $\infty$-category} is a commutative monoid object in $\operatorname{Cat}_{\infty}$. 
\end{definition}

\begin{remark} Unwinding the definition we can see that a symmetric monoidal structure on an $\infty$-category $\mathcal{C}$ is encoded by a functor $\mathcal{C}^{\otimes}:N(\Gamma) \to \operatorname{Cat}_{\infty}$, where $\mathcal{C}^{\otimes}_{\langle 0 \rangle}$ is a final object and $\mathcal{C}^{\otimes}_{\langle n \rangle} \simeq \mathcal{C}^{\otimes}_{\langle 1 \rangle} \times \ldots \times \mathcal{C}^{\otimes}_{\langle 1 \rangle}$ and $\mathcal{C}^{\otimes}_{\langle 1 \rangle} \simeq \mathcal{C}$.
\end{remark}

\begin{definition}
A commutative algebra in a symmetric monoidal category is a commutative monoid object with respect to the monoidal product in $\mathcal{C}$. This can be formulated as an $\infty$-functor $A:N(\Gamma) \to \mathcal{C}$ where $A(\langle n \rangle) \simeq A(\langle 1 \rangle)^{\otimes_n}$ and $A(\langle 0 \rangle)$ is a final object. The object $A(\langle 1 \rangle)$ can be thought of as the underlying algebra object of $A$.

The $\infty$-category of {\em commutative algebra objects in $\mathcal{C}$} denoted by $\operatorname{CAlg}(\mathcal{C})$ is the full subcategory of $\operatorname{Fun}(N(\Gamma), \mathcal{C})$ spanned by commuative algebra objects of $\mathcal{C}$.
\end{definition}

Given a symmetric monoidal category $\mathcal{C}$ and $A\in \operatorname{CAlg}(\mathcal{C})$, there is a category of commutative modules over $A$ denoted by $\operatorname{Mod}_A(\mathcal{C})$ (see \cite[]{HA}) which is equivalrnt to the $\infty$-category $\operatorname{LMod}_A(\mathcal{C})$ of left module objects over $A$ in $\mathcal{C}$. The $\infty$-category $\operatorname{Mod}_A(\mathcal{C})$ has the following notable features:

\begin{enumerate}

\item For every $A\in \operatorname{CAlg}(\mathcal{C})$ the $\infty$-category $\operatorname{LMod}_A(\mathcal{C})$ inherits a symmetric monoidal  structure given by the relative tensor product -$\otimes_A$-. This is encoded as a functor $\operatorname{LMod}_A(\mathcal{C})^{\otimes} \in \operatorname{Fun}(N(\Gamma), \operatorname{Cat}_{\infty})$. 
 
\item Let $f:A \to B \in \operatorname{CAlg}(\mathcal{C})$. Then the forgetful functor $\operatorname{Mod}_B(\mathcal{C}) \to \operatorname{Mod}_A(\mathcal{C})$ admits a left adjoint $M \mapsto M \otimes_AB$, which is a symmetric monoidal functor from $\operatorname{Mod}_A(\mathcal{C})$ to  $\operatorname{Mod}_B(\mathcal{C})$.

\end{enumerate}

\begin{definition} Given a commutative algebra $A$ in a symmetric monoidal category $\mathcal{C}$, the {\em $\infty$-category of $A$-algebra objects in $\mathcal{C}$} is defined to be the $\infty$-category $\operatorname{Alg}(\operatorname{LMod}_A(\mathcal{C}))$. Similarly, the $\infty$-category of {\em commutatitve $A$-algebras} in $\mathcal{C}$ defined to be $\operatorname{CAlg}(\operatorname{LMod}_A(\mathcal{C}))$.

We shall use the notations $\operatorname{Alg}_A(\mathcal{C})$ and $\operatorname{CAlg}_A(\mathcal{C})$ for these $\infty$-categories.
\end{definition}

\begin{definition}({\em Bi-algebras and Hopf algebras})
\begin{enumerate}
\item
A {\em commutative bi-algebra} in a symmetric monoidal category $\mathcal{C}$ is an object in $\operatorname{CoAlg}(\operatorname{CAlg}(\mathcal{C}))$. Given $A \in \operatorname{CAlg}(\mathcal{C})$, a {\em commutative bi-algebra over $A$} in $\mathcal{C}$ is an object in $\operatorname{CoAlg}(\operatorname{CAlg}_A)$.

We shall denote the $\infty$-categories by $\operatorname{BiAlg}(\mathcal{C})$ and $\operatorname{BiAlg}_A(\mathcal{C})$ respectively.

\item  Given a commutative bi-algebra $A$ in $\mathcal{C}$, this can be expressed as a functor $f:N(\Delta) \to \operatorname{CAlg}(\mathcal{C})$. We say that $A$ is a {\em commutative Hopf algebra object in $\mathcal{C}$} if $f^{op}:N(\Delta)^{op} \to \operatorname{Aff}(\mathcal{C})$ defines a group object in $\operatorname{Aff}(\mathcal{C})$. The $\infty$-category of {\em commutative Hopf algebra objects} in a symmetric monoidal category $\mathcal{C}$ is the full subcategory of $\operatorname{BiAlg}(\mathcal{C})$ spanned by Hopf algebra objects. Denote this $\infty$-category by $\operatorname{CHopf}(\mathcal{C})$. 

Given $A\in \operatorname{CAlg}(\mathcal{C})$, define the $\infty$-category of {\em commutative Hopf algebra objects over $A$} to be the full subcategory of $\operatorname{BiAlg}_A$ spanned by commutative Hopf algebra objects.

\item
Given a Bi-algebra $B \in \operatorname{CoAlg}(\operatorname{Alg}_A)$, there is an underlying commutative $A$-algebra $B$ with a comodule structure encoded as a cosimplicial object $N(\Delta) \to \operatorname{CAlg}_A$ of the form 
$$\xymatrix{
(B/A)^{\bullet} = (A \ar[r] \ar@<1ex>[r] &B \ar[r] \ar@<1ex>[r] \ar@<-1ex>[r] &B\otimes_AB \cdots )\\
}$$
Then, the $\infty$-category of comodules over the Bi-algebra $B$ over $A$ is 
$$\operatorname{Comod}_B(\operatorname{Mod}_A) \simeq \varprojlim \operatorname{Mod}_{(B/A)^{\bullet}}.$$
\end{enumerate}
\end{definition}

\subsection{Presentable $\infty$-categories}

\begin{definition}(\cite[5.5]{HTT})
An $\infty$-category is {\em presentable} if it is closed under all small colimits (also limits by Prop.5.5.2.4 \cite{HTT} and more over are generated in a weak sense by a small category (accessible). Presentable $\infty$-categories form an $\infty$-category $Pr^L$ whose morphism are continous funtors, i.e. functors that preserve all small colimits, $\operatorname{Map}_{Pr^L}(\mathcal{C},\mathcal{D})= \operatorname{Fun}^L(\mathcal{C},\mathcal{D})$. 
\end{definition}

The $\infty$-category $\widehat{\mathcal{C}}_{\infty}$ is monoidal via the cartesian product. The subcategory $Pr^L \subset \widehat{\mathcal{C}}_{\infty}$ obtains a symmetric monoidal structure $$\otimes:Pr^L \times Pr^L \to Pr^l.$$ The tensor product $\mathcal{C}\otimes \mathcal{D}$ of $\mathcal{C}$,$\mathcal{D}$ presentable $\infty$-categories is the universal recipient of a functor from the cartesian product $\mathcal{C} \times \mathcal{D}$ which preserves colimits in each variable separately. $\mathcal{C}\otimes\mathcal{D}$ is defined to be $\operatorname{Fun}^R(\mathcal{C}^{op},\mathcal{D})$. The unit object of the monoidal structure in $Pr^L$ is $\mathcal{S}$, the $\infty$-category of spaces. Every presentable $\infty$-category is tensored over $\mathcal{S}$.

\begin{definition}
An object in $\operatorname{Alg}(Pr^{L})$ corresponds to a monoidal $\infty$-category $\mathcal{C}^{\otimes}$ whose underlying category $\mathcal{C}^{\otimes}_{[1]} \simeq \mathcal{C}$ is presentable and the product $\mathcal{C}\times\mathcal{C} \to \mathcal{C}$ preserves colimits separately in each variable. We shall call $\operatorname{Alg}(Pr^{L})$ the $\infty$-category of {\em presentable monoidal categories}.
\end{definition}

\begin{definition}
The $\infty$-category $Pr^L$ is both left and right tensored over $Pr^L$ by the tensor product of presentable $\infty$-categories. Denote by $\operatorname{LMod}(Pr^L)$ and $\operatorname{RMod}(Pr^L)$ the $\infty$-categories of left and right module objects respectively.

Given a presentable monoidal $\infty$-category $\mathcal{C}$, denote by $$\operatorname{LMod}_{\mathcal{C}}(Pr^L) = \operatorname{LMod}(Pr^L)\times_{\operatorname{Alg}(\mathcal{C})}\{\mathcal{C}\}$$ and, $$\operatorname{RMod}_{\mathcal{C}}(Pr^L) = \operatorname{RMod}(Pr^L)\times_{\operatorname{Alg}(\mathcal{C})}\{\mathcal{C}\}$$ the $\infty$-categories of {\em left  and right modules over $\mathcal{C}$} respectively.
\end{definition}

\begin{remark}

Let $\mathcal{C} \in \operatorname{Alg}(Pr^L)$. There is relative tensor product $$\operatorname{RMod}_{\mathcal{C}}(Pr^L) \times \operatorname{LMod}_{\mathcal{C}}(Pr^L) \to Pr^L$$ defined using the two-sided bar construction.

\begin{enumerate}

\item If $\mathcal{C} \in \operatorname{CAlg}(Pr^L)$ is a symmetric monoidal presentable $\infty$-category, then $\operatorname{LMod}_{\mathcal{C}}(Pr^L)$ gets a symmetric monoidal structure via the relative tensor product,$$ \operatorname{LMod}_{\mathcal{C}}(Pr^L) \times \operatorname{LMod}_{\mathcal{C}}(Pr^L) \to \operatorname{LMod}_{\mathcal{C}}(Pr^L).$$ Given $\mathcal{A},\mathcal{B}$ two presentable $\infty$-categories left tensored over a symmetric monoidal $\infty$-category $\mathcal{C}$, we denote the image under the realtive tensor product by $\mathcal{A}\otimes_{\mathcal{C}}\mathcal{B}$.
\item Given a map $\mathcal{C} \to \mathcal{D}$ of symmetric monoidal presentable $\infty$-categories, there is a symmetric monoidal functor $\operatorname{LMod}_{\mathcal{C}}(Pr^L) \to \operatorname{LMod}_{\mathcal{D}}(Pr^L)$ which on objects is $\mathcal{A} \mapsto \mathcal{A}\otimes_{\mathcal{C}}\mathcal{D}$, and is right adjoint to the forgetful functor.
\end{enumerate}
\end{remark}

\begin{definition}
Given a symmetric monoidal presentable $\infty$-category $\mathcal{C}$, the $\infty$-category of {\em algebra objects over $\mathcal{C}$} is the $\infty$-category $\operatorname{Alg}(\operatorname{LMod}_{\mathcal{C}}(Pr^L))$. Similarly, the $\infty$-category of {\em commutative algebra objects over $\mathcal{C}$} is given by $\operatorname{CAlg}(\operatorname{LMod}_{\mathcal{C}}(Pr^L))$. We shall denote these $\infty$-categories by $\operatorname{Alg}_{\mathcal{C}}$ and $\operatorname{CAlg}_{\mathcal{C}}$ respectively.
\end{definition}

Denote by $Pr^{L,\sigma}$ the full subcategory of $Pr^L$ spanned by stable $\infty$-categories. The symmetric monoidal structure on $Pr^L$ restricts to one on the full subcategory $Pr^{L,\sigma}$. The unit for the monoidal structure is $\operatorname{Sp}$ the stable $\infty$-category of spectra. 

\begin{definition}
An object in $\operatorname{CAlg}(Pr^{L,\sigma})$ corresponds to a symmetric monoidal $\infty$-category $\mathcal{C}^{\otimes}$ whose underlying category $\mathcal{C}^{\otimes}_{[1]} \simeq \mathcal{C}$ is presentable and stable, and the product $\mathcal{C}\times\mathcal{C} \to \mathcal{C}$ preserves colimits separately in each variable. We shall call $\operatorname{CAlg}(Pr^{L,\sigma})$ the $\infty$-category of {\em symmetric monoidal presentable stable $\infty$-categories}.
\end{definition}
\begin{remark}
\begin{enumerate}
\item[]
\item
The $\infty$-category $\operatorname{Alg}(Pr^{L,\sigma})$ has a unit object which is equivalent to the unit object of $Pr^L$ under the tensor monoidal structure. This produces a monoidal structure on spectra:
$$\wedge:\operatorname{Sp} \times \operatorname{Sp} \to \operatorname{Sp}$$ which is called the smash product monoidal structure. The algebra and commutative algebra objects of $\operatorname{Sp}$ with respect to $\wedge$ are exactly the classical $A_{\infty}$-ring spectra and $E_{\infty}$-ring spectra.
\item
Every presentable stable $\infty$-category is canonically tensored over $\operatorname{Sp}$.
\end{enumerate}
\end{remark}

\subsection{(Co)monads in $\infty$-categories}
\begin{definition}((Co)monads and (co)modules)
Given an $\infty$-category $\mathcal{D}$, the $\infty$-category of functors $\operatorname{Fun}(\mathcal{D},\mathcal{D})$ is monoidal and $\mathcal{D}$ is left tensored over $\operatorname{Fun}(\mathcal{D},\mathcal{D})$. 

\begin{enumerate}
\item A functor $K \in \operatorname{Fun}(\mathcal{D},\mathcal{D})$ is a {\em (co)monad} if $K \in \operatorname{(Co)Alg}(\operatorname{Fun}(\mathcal{D},\mathcal{D}))$.
\item There is an $\infty$-category $\operatorname{L(Co)mod}_K(\mathcal{D})$ of (co)modules over a (co)monad $K$ in $\mathcal{D}$.
\end{enumerate}
\end{definition}

There is a natural forgetful map $U_K:\operatorname{L(Co)mod}_K(\mathcal{D}) \to \mathcal{D}$. 

\begin{remark} Informally, a monad $T$ on an $\infty$-category $\mathcal{C}$ is an endofunctor $T:\mathcal{C}\to \mathcal{C}$ equipped with maps $1 \to T$ and $T\circ T \to T$ which satisfies the usual unit and associativity conditions up to coherent homotopy. A module over the monad $T$ is an object $x\in \mathcal{C}$ equipped with a structure map $\eta: T(x) \to x $ which is compatible with the algebra structure on $T$, again up to coherent homotopy. The forgetful map takes a module to the underlying object in $\mathcal{C}$.\end{remark}

\begin{prop}(see \cite[Prop. 4.7.4.3]{HA})\label{composition}
Given a functor $F:\mathcal{C} \to \mathcal{D}$ of $\infty$-categories which admits a right adjoint $G$. Then the composition $K=F\circ G \in \operatorname{Fun}(\mathcal{D},\mathcal{D})$ is a comonad on $\mathcal{D}$ and $T=G\circ F$ is a monad on $\mathcal{C}$.

There are canonial maps $F':\mathcal{C} \to \operatorname{LComod}_K(\mathcal{D})$ and  $G':\mathcal{D} \to \operatorname{LMod}_T(\mathcal{C})$ such that $U_K\circ F' \simeq F \in \operatorname{Fun}(\mathcal{D},\mathcal{D})$ and $U_T\circ G' \simeq G \in \operatorname{Fun}(\mathcal{D})$,
\end{prop}

\begin{remark} In ordinary categorical setting it is easy to check that the composition $T$ is a monad on $\mathcal{C}$. However, as Lurie notes in \cite[Remark 4.7.0.4]{HA}, this a not so straightforward in the $\infty$-categorical setting. In order to give a algebra structure on the composition $T = G\circ F \in \operatorname{Fun}(\mathcal{C}, \mathcal{C})$ it is not enough to give a produce a single natural transformation $T\circ T \to T$ but an infinite system of coherence data, which is not easy to describe explicitly.
\end{remark}

\begin{definition} Let $F:\mathcal{C} \to \mathcal{D}$ be a map of $\infty$-categories that admits a right adjoint $G$ and let $K=F\circ G$ be the composition comonad on $\mathcal{D}$ and $T = G\circ F$ be the composition monad on $\mathcal{C}$. Then $F$ is said to be {\em comonadic} if the comparison map $F':\mathcal{C} \to \operatorname{LComod}_K(\mathcal{D})$ is an equivalence of $\infty$-categories, and $G$ is said to be {\em  monadic} if the comparison map $G': \mathcal{D} \to \operatorname{LMod}_T(\mathcal{C})$ is an equivalence of $\infty$-categories.
\end{definition}

\end{document}